%
%
%
%
\documentclass{amsart}

\newtheorem{theorem}{Theorem}[section]
\newtheorem{lemma}[theorem]{Lemma}
\newtheorem{proposition}[theorem]{Proposition}
\newtheorem{corollary}[theorem]{Corollary}
\theoremstyle{definition}
\newtheorem{definition}[theorem]{Definition}
\newtheorem{example}[theorem]{Example}

\theoremstyle{remark}
\newtheorem{remark}[theorem]{Remark}

\numberwithin{equation}{section}

\usepackage{graphicx}
%
\usepackage{tikz}
\usepackage{tikz-cd}
\usetikzlibrary{arrows,automata}
\usetikzlibrary{shapes,snakes,backgrounds}
\usepackage[dvips]{epsfig}
\usepackage{amsfonts}
\usepackage{amssymb}
\usepackage{amscd}
\usepackage{amstext}
\usepackage{amsmath}
\usepackage{pstricks}
\usepackage{enumerate}
\usepackage{hyperref}
\def\STUFFLE{\mathop{\stuffle}\limits}
\usepackage{ulem}
\def\Lim{\displaystyle\lim}
\def\abs#1{|#1|}
\def\absv#1{\Vert#1\Vert}

\def\shuffle{\mathop{_{^{\sqcup\!\sqcup}}}} 

\def\mod{{\rm\ mod\ }}

\def\adots{\mathinner{\mkern2mu\raise1pt\hbox{.}
\mkern3mu\raise4pt\hbox{.}\mkern1mu\raise7pt\hbox{.}}}

\def\pointir{\unskip . -- \ignorespaces}

\def\up#1{\raise 1ex\hbox{\footnotesize#1}}

\def\H{\mathcal{H}}

\def\span{\mathop\mathrm{span}\nolimits}

\def\path{\rightsquigarrow}


\def\Lyn{{\mathcal Lyn}}


\def\A{\mathcal{A}}

\def\N{{\mathbb N}}
\def\C{{\mathbb C}}
\def\R{{\mathbb R}}
\def\Z{{\mathbb Z}}
\def\Q{{\mathbb Q}}

\def\Q{{\mathbb Q}}

\def\L{\mathrm{L}}
\def\H{\mathrm{H}}


\def\calC{{\mathcal C}}
\def\calE{{\mathcal E}}

\def\calG{{\mathcal G}}
\def\calH{{\mathcal H}}
\def\calL{{\mathcal L}}
\def\calO{{\mathcal O}}
\def\calX{{\mathcal X}}
\def\calZ{{\mathcal Z}}
\def\scal#1#2{\langle #1 | #2 \rangle}

\def\ncs#1#2{#1\langle\langle #2\rangle\rangle}

\def\ncp#1#2{#1\langle #2\rangle}

\def\dext#1#2{#1\{ \!\{ #2\} \!\}}

\def\Li{\mathrm{Li}}

\def\calA{\mathcal{A}}

\gdef\stuffle{\;%
  \setlength{\unitlength}{0.0125cm}%
  \begin{picture}(20,10)(220,580)
  \thinlines
  \put(220,592){\line( 0,-1){ 10}}
  \put(220,582){\line( 1, 0){ 20}}
  \put(240,582){\line( 0, 1){ 10}}
  \put(230,592){\line( 0,-1){ 10}}
  \put(225,587){\line( 1, 0){ 10}}
  \end{picture}\;
}



\newcommand\rsmraise[1]{%
  \ifx#1\displaystyle .8\else
    \ifx#1\textstyle .8\else
      \ifx#1\scriptstyle .6\else
        .45%
      \fi
    \fi
  \fi}
\begingroup
\count0=\time \divide\count0by60 
\count2=\count0 \multiply\count2by-60 \advance\count2by\time
\def\2#1{\ifnum#1<10 0\fi\the#1}
\xdef\isodayandtime{
{\2\day-\2\month-\the\year\space\2{\count0}:%
\2{\count2}}}
\endgroup

\def\poly#1#2{#1\langle #2 \rangle}

\def\QX{\poly{\Q}{X}}

\def\CX{\poly{\C}{X}}

\def\QY{\poly{\Q}{Y}}

\def\CY{\poly{\C}{Y}}

\def\QY_0{\Q\left\langle{Y_0}\right\rangle}

\newcommand*{\longtwoheadrightarrow}{\ensuremath{\joinrel\relbar\joinrel\twoheadrightarrow}}

\begingroup
\count0=\time \divide\count0by60 
\count2=\count0 \multiply\count2by-60 \advance\count2by\time
\def\2#1{\ifnum#1<10 0\fi\the#1}
\xdef\isodayandtime{
{\2\day-\2\month-\the\year\space\2{\count0}:%
\2{\count2}}}
\endgroup

\newcounter{per1}
\setcounter{per1}{1}
\definecolor{MyDarkBlue}{rgb}{0,0.08,0.4}



\begin{document}

\title[Families of eulerian functions]{
Familles de fonctions eul\'eriennes impliqu\'ees dans la r\'egularisation de polyz\^etas divergents\\
Families of eulerian functions involved in regularization of divergent polyzetas}

\author{V.C. Bui}
\address{Hue University, 77, Nguyen Hue, Hue, Viet Nam,}
\email{bvchien.vn@gmail.com}

\author{V. Hoang Ngoc Minh}
\address{University of Lille, 1 Place D\'eliot, 59024 Lille, France,\\
LIPN - UMR 7030, CNRS, 93430 Villetaneuse, France,}
\email{vincel.hoang-ngoc-minh@univ-lille.fr,minh@lipn.univ-paris13.fr}
\author{Q. H. Ngo}
\address{Hanoi University of Science and Technology, 1 Dai Co Viet, Hai Ba Trung, Ha Noi, Viet Nam,}
\email{hoan.ngoquoc@hust.edu.vn}
\thanks{The thirst author was supported in part by  Vietnam National Foundation for Science and
Technology Development (NAFOSTED) under grant number 101.04-2021.41.}

\author{V. Nguyen Dinh}
\address{LIPN-UMR 7030, 99 avenue Jean-Baptiste Cl\'ement, 93430 Villetaneuse, France,}
\email{nguyendinh@lipn.univ-paris13.fr}

\subjclass[2020]{05E16, 11M32, 16T05, 20F10, 33F10, 44A20}

\date{\today}


\keywords{Eulerian functions \and zeta function \and Gamma function.}

\begin{abstract}
Extending the Eulerian functions, we study their relationship with zeta function of several variables.
In particular, starting with Weierstrass factorization theorem (and Newton-Girard identity)
for the complex Gamma function, we are interested in the ratios of $\zeta(2k)/\pi^{2k}$
and their multiindexed generalization, we obtain an analogue situation and draw
some consequences about a structure of the algebra of polyzetas values, by means of some
combinatorics of words and noncommutative rational series. The same frameworks also
allow to study the independence of a family of eulerian functions.\\

{\bf R\'esum\'e:}
En g\'en\'eralisant les fonctions euleriennes, nous \'etudions leurs relations avec
la fonction z\^eta en plusieurs variables. En particulier, \`a partir du th\'eor\`eme de
factorisation de Weierstrass (et l'identit\'e de Newton-Girard) pour la fonction Gamma
complexe, nous nous int\'eressons aux rapports $\zeta(2k)/\pi^{2k}$ et leurs
g\'en\'eralisations. Nous obtenons une situation analogue et nous tirerons quelques
cons\'equences sur une structure de l'alg\`ebre des valeurs polyz\^etas, au moyen de la
combinatoire des mots et des s\'eries rationnelles en variables non commutatifs.
Le m\^eme cadre de travail permet \'egalement d'\'etudier l'ind\'epen\-dance d'une famille
de fonctions euleriennes.
\end{abstract}

\maketitle
\tableofcontents

\section{Introduction}\label{introduction}

Eulerian functions are most significant for analytic number theory
and they are widely applied in Probability theory and in Physical sciences.
They are tightly relating to Riemann zeta functions, for instance as follows
\begin{eqnarray}\label{zeta}
\zeta(s)=\frac1{{\Gamma(s)}}\int_0^{\infty}dt\frac{t^{s-1}}{e^t-1}
&\mbox{and}&{\Gamma(s)}=\int_0^{\infty}du\;u^{s-1}e^{-u},\mbox{ for }\Re(s)>0.
\end{eqnarray}
The function $\Gamma$ is meromorphic, with no zeroes and $-\N^*$ as set of simple poles.
Hence $\Gamma^{-1}$ is entire and admits $-\N^*$ as set of simple zeroes. Moreover,
it satisfies\footnote{{\it i.e.} its coefficients are real, we will see later the
combinatorial content of them.} $\Gamma(\overline{z})=\overline{\Gamma(z)}$.
From Weierstrass factorization \cite{Dieudonne} and Newton-Girard identity \cite{lascoux},
we have successively
\begin{eqnarray}\label{Weierstrass}
\frac1{\Gamma(z+1)}
=e^{\gamma z}\prod_{n\ge1}\Bigl(1+\frac{z}{n}\Bigr)e^{-\frac{z}{n}}
=\exp\Bigl(\gamma z-\sum_{k\ge2}\zeta(k)\frac{(-z)^k}{k}\Bigr).
\end{eqnarray}

Using the following functional equation and Euler's complement formula, {\it i.e.}
\begin{eqnarray*}
\Gamma(1+z)=z\Gamma(z)&\mbox{and}&\Gamma(z)\Gamma(1-z)=\frac{\pi}{\sin(z\pi)},
\end{eqnarray*}
and also introducing the {\it partial beta function} defined (for any $a,b,z\in\C$
such that $\Re a>0,\Re b>0,\abs{z}<1$) by
\begin{eqnarray}\label{beta}
\mathrm{B}(z;a,b):=\int_0^zdt\;t^{a-1}(1-t)^{b-1}
\end{eqnarray}
and then, classically, $\mathrm{B}(a,b):=\mathrm{B}(1;a,b)={\Gamma(a)\Gamma(b)}/{\Gamma(a+b)}$,
one has (for any $u,v\in\C$ such that $\abs{u}<1,\abs{v}<1$ and $\abs{u+v}<1$) the following expression 
\begin{eqnarray}\label{beta1}
\exp\biggl(-\sum_{n\ge2}{\zeta(n)}\frac{(u+v)^n-(u^n+v^n)}n\biggr)
&=&\frac{\Gamma(1-u)\Gamma(1-v)}{\Gamma(1-u-v)},\\
&=&\frac{\Gamma(u+v)}{\Gamma(u)\Gamma(v)}\pi\frac{\sin((u+v)\pi)}{\sin(u\pi)\sin(v\pi)}\\
&=&\frac{\pi}{{\mathrm{B}(u,v)}}(\cot(u\pi)+\cot(v\pi)).\label{beta2}
\end{eqnarray}
In particular, for $v=-u$ ($\abs{u}<1$), one gets
\begin{eqnarray*}
\exp\Bigl(-\sum_{k\ge1}{\zeta(2k)}\frac{u^{2k}}{k}\Bigr)
=\frac1{{\Gamma(1-u)\Gamma(1+u)}}=\frac{\sin(u\pi)}{u\pi}.
\end{eqnarray*}
Hence, taking the logarithms and considering Taylor expansions, one obtains
\begin{eqnarray}\label{Euler}
-\sum_{k\ge1}{\zeta(2k)}\frac{u^{2k}}{k}
&=&\log\Bigl(1+\sum_{n\ge1}\frac{(u\mathrm{i}\pi)^{2n}}{{\Gamma(2n+2)}}\Bigr)\\
&=&\sum_{k\ge1}(u\mathrm{i}\pi)^{2k}\sum_{l\ge1}\frac{(-1)^{l-1}}{l}
\sum_{n_1,\ldots,n_l\ge1\atop n_1+\ldots+n_l=k}\prod_{i=1}^l\frac{1}{{\Gamma(2n_i+2)}}.
\end{eqnarray}
One can deduce then the following expression\footnote{
Note that Euler gave another explicit formula using Bernoulli numbers.} for ${\zeta(2k)}$:
\begin{eqnarray}\label{paire}
\frac{{\zeta(2k)}}{\pi^{2k}}=k\sum_{l=1}^k\frac{(-1)^{k+l-1}}{l}
\sum_{n_1,\ldots,n_l\ge1\atop n_1+\ldots+n_l=k}\prod_{i=1}^l\frac{1}{{\Gamma(2n_i+2)}}\in\Q.
\end{eqnarray}

Now, more generally, for any $r\in\N_{\ge1}$ and $(s_1,\ldots,s_r)\in{\mathbb C}^r$,
let us consider the following {\it several variable zeta function}
\begin{eqnarray}
\zeta(s_1,\ldots,s_r):=\sum_{n_1>\ldots>n_r>0}n_1^{-s_1}\ldots n_r^{-s_r}
\end{eqnarray}
which converges for $(s_1,\ldots,s_r)$ in the open sub-domain of $\C^r,r\ge1,$ \cite{Goncharov,Zhao}
\begin{eqnarray*}
\calH_r:=\{(s_1,\ldots,s_r)\in\C^r\,\vert\,
\forall m=1,\ldots,r,\,\Re(s_1)+\ldots+\Re(s_m)>m\}.
\end{eqnarray*}
In the convergent cases, from a theorem by Abel, for $n\in\N,z\in\C,\abs{z}<1$,
its values can be obtained as the following limits
\begin{eqnarray}\label{Abel}
\zeta(s_1,\ldots,s_r)
=\Lim_{z\rightarrow1}\Li_{s_1,\ldots,s_r}(z)
=\Lim_{n\rightarrow+\infty}\H_{s_1,\ldots,s_r}(n),
\end{eqnarray}
where the following {\it polylogarithms} are well defined
\begin{eqnarray}
\Li_{s_1,\ldots,s_r}(z)&:=&\sum_{n_1>\ldots>n_r>0}\frac{z^{n_1}}{n_1^{s_1}\ldots n_r^{s_r}},\\
\frac{\Li_{s_1,\ldots,s_r}(z)}{1-z}&=&\sum_{n\ge0}\H_{s_1,\ldots,s_r}(n)z^n,
\end{eqnarray}
and so are the Taylor coefficients\footnote{These quantities are generalizations
of the harmonic numbers $H_n=1+2^{-1}\ldots+n^{-1}$ to which they boil down for $r=1,s_1=1$.
They are also truncations of the zeta values $\zeta(s_1,\ldots,s_r)$ at order $n+1$.}
here simply called {\it harmonic sums} 
\begin{eqnarray}
\H_{s_1,\ldots,s_r}:\N&\longrightarrow&\Q\mbox{({\it i.e. } an arithmetic function)},\\
n&\longmapsto&\H_{s_1,\ldots,s_r}(n)=\sum_{n\ge n_1>\ldots>n_r>0}n_1^{-s_1}\ldots n_r^{-s_r}.
\end{eqnarray}

On $\calH_r\cap\N^r$, the polyzetas can be represented by the following integral representation\footnote{
On $\calH_r$, $\log(a/b)$ is replaced by $\log(a)-\log(b)$.} over $]0,1[$ \cite{IMACS}
(here, one set $\lambda(z):={z}(1-z)^{-1},t_0=1$ and $u_{r+1}=1$):
\begin{eqnarray}
\zeta(s_1,\ldots,s_r)
&=&\int_0^1\omega_1(t_1)\frac{\log^{s_1-1}(t_0/t_1)}{\Gamma(s_1)}
\ldots\int_0^{t_{r-1}}\omega_1(t_r)\frac{\log^{s_r-1}(t_{r-1}/t_r)}{\Gamma(s_r)}\nonumber\\
&=&\prod_{i=1}^r\frac1{\Gamma(s_i)}
\int_{[0,1]^r}\prod_{j=1}^r\omega_0(u_j)\lambda(u_1\ldots u_j)\log^{s_j-1}(\frac1{u_j})\nonumber\\
&=&\prod_{i=1}^r\frac1{\Gamma(s_i)}\int_{\R_+^r}\prod_{j=1}^r\omega_0(u_j)u_j^{s_j}\lambda(e^{-(u_1\ldots u_j)}),\label{LambdaExpr}
\end{eqnarray}
with $\omega_0(z)=dz/z$ and $\omega_1(z)=dz/(1-z)$.

As for the Riemann zeta function in \eqref{zeta}, we observe that (\ref{LambdaExpr})
involves again the factors (and products) of eulerian Gamma function and also their
quotients (hence, eulerian Beta function). In the sequel, in continuation with \cite{CASC2018,Ngo2,CM},
we propose to study the ratios $\zeta(s_1,\ldots,s_r)/\pi^{s_1+\ldots+s_r}$ (and others),
an analogue of \eqref{paire}, which will be achieved as consequence of
regularizations, via the values of {\it entire} functions, of {\it divergent}
polyzetas and infinite sums of polyzetas (see Theorem \ref{Newton-Girard}
and Corollaries \ref{identity}, \ref{C1} in Section \ref{regularization})
for which a theorem by Abel (see \eqref{Abel}) could not help any more.
This achievement is justified thanks to the extensions of polylogarithms and harmonic sums
(see Theorems \ref{Indexation} and \ref{Indexation2} in Section \ref{Polylogarithms})
and thanks to the study of the independence of a family of eulerian functions
which can be viewed as generating series of zeta values (for $r\ge2$):
\begin{eqnarray}
\frac{1}{\Gamma_{y_r}(z+1)}
=\sum_{k\ge0}\zeta(\underbrace{{r,\ldots,r}}_{k{\tt times}})z^{kr}
=\exp\Bigl(-\sum_{k\ge1}\zeta(kr)\frac{(-z^r)^{k}}k\Bigr)
\end{eqnarray}
(see Propositions \ref{Weierstrass2}--\ref{Weierstrass3} and Theorem \ref{independence}
in Section \ref{eulerianfunctions}) via the combinatorial tools introduced in
Section \ref{combinatorialframeworks} (see Lemma \ref{ind_lin}, \ref{general}
in Section \ref{combinatorialframeworks}). Finally, identities among these
(convergent or divergent) generating series of zeta values are suitable
to obtain relations, at arbitrary weight, among polyzetas (see Examples
\ref{example1} and \ref{example2} in Section \ref{regularization}).

\section{Families of eulerian functions}
In all the sequel, $\C[\{f_i\}_{i\in I}]$ denotes the algebra generated by $\{f_i\}_{i\in I}$,
$\dext{\C}{(g_i)_{i\in I}}$ denotes the differential $\C$-algebra\footnote{{\it i.e.} the
$\C$-algebra generated by $g_i$ and their derivatives \cite{VdP}.}, generated by the family
$(g_i)_{i\in I}$ of the $\C$-commu\-tative differential ring $(\calA,\partial)$ ($1_{\calA}$
is its neutral element) and $\calC_0$ denotes a differential subring of $\calA$
($\partial\calC_0\subset\calC_0$) which is an integral domain containing the field of constants.
If the ring $\calA$ is without zero divisors then the fields of fractions $\mathrm{Frac}(\calC_0)$
and $\mathrm{Frac}(\A)$ are naturally differential fields and can be seen as the smallest ones
containing $\calC_0$ and $\calA$, respectively, satisfying
$\mathrm{Frac}(\calC_0)\subset\mathrm{Frac}(\calA)$.

\subsection{Words and formal power series}\label{combinatorialframeworks}

Let $\calX$ denote either the alphabets $X:=\{x_0,x_1\}$ or $Y:=\{y_k\}_{k\ge1}$,
equipped with a total ordering, and let $\calX^*$ denote the monoid freely generated by $\calX$
(its unit is denoted by $1_{\calX^*}$). The set of noncommutative polynomials (resp. series)
over $\calX$ with coefficients in a commutative ring $A$, containing $\Q$, is denoted by
$\ncp{A}{\calX}$ (resp. $\ncs{A}{\calX}$) \cite{berstel}.
The algebraic closure of\footnote{In general, $\widehat{A.{\calX}}$ is the module of
homogeneous series $S\in \ncs{A}{\calX}$ of degree one.} $\widehat{A.{\calX}}$ by the
rational operations\footnote{Here ${\tt conc}$ stand for the Cauchy product (concatenation)
and $\Delta_{{\tt conc}}$ is its co-product.

For any $S\in\ncs{A}{\calX}$ such that $\scal{S}{1_{{\calX}^*}}=0$, the Kleene star of $S$ is defined by
$S^*:=(1-S)^{-1}=1+S+S^2+\ldots$.}
$\{{\tt conc},+,*\}$ within $\ncs{A}{\calX}$ is denoted by $\ncs{A^{\mathrm{rat}}}{\calX}$ \cite{berstel}.
We will also consider the following Hopf algebras and, in the case of $A=\mathbf{k}$ being a field, their
Sweedler's dual\footnote{Here, $\shuffle$ (resp. $\stuffle$) stand for the shuffle (resp. stuffle) product
and $\Delta_{\shuffle}$ (resp. $\Delta_{\stuffle}$) is its co-product (see \cite{reutenauer} or \cite{Ngo2}).

The antipode of the first one is given by $a(w)=(-1)^{|w|}\widetilde{w}$, the antipode of
the second one exists because the bialgebra is graded by weight, but is more complicated.} \cite{PVNC,CM}
\begin{eqnarray}
(\ncp{A}{{\calX}},{\tt conc},\Delta_{\shuffle},1_{{\calX}^*},\epsilon)&\mbox{and}&
(\ncp{A}{Y},{\tt conc},\Delta_{\stuffle},1_{Y^*},\epsilon),\\
(\ncs{\mathbf{k}^{\mathrm{rat}}}{\calX},\shuffle,1_{{\calX}^*},\Delta_{{\tt conc}},\epsilon)&\mbox{and}&
(\ncs{\mathbf{k}^{\mathrm{rat}}}{Y},\stuffle,1_{Y^*},\Delta_{{\tt conc}},\epsilon).
\end{eqnarray}
In particular, using the set of Lyndon words, denoted by $\Lyn\calX$, one constructs the basis
$\{P_l\}_{l\in\Lyn\calX}$, for $\ncp{\calL ie_{A}}{\calX}$, generating the
PBW-Lyndon basis $\{P_w\}_{w\in\calX^*}$ for $(\ncp{A}{{\calX}},{\tt conc},1_{{\calX}^*})$
and then the graded dual basis $\{S_w\}_{w\in\calX^*}$ containing the pure transcendence
basis $\{S_l\}_{l\in\Lyn\calX}$ for the shuffle algebra $(\ncs{A}{\calX},\shuffle,1_{\calX^*})$.
Similarly, the basis $\{\Pi_l\}_{l\in\Lyn Y}$ generating the PBW-Lyndon basis $\{\Pi_w\}_{w\in Y^*}$
for $(\ncp{A}{Y},{\tt conc},1_{Y^*})$ and then the graded dual basis $\{\Sigma_w\}_{w\in Y^*}$
containing the pure transcendence basis $\{\Sigma_l\}_{l\in\Lyn Y}$ for the stuffle algebra
$(\ncs{A}{Y},\stuffle,1_{Y^*})$.

\begin{lemma}\label{general}
\begin{enumerate}
\item The algebras $(\C[\{x^*\}_{x\in\calX}],\shuffle,1_{\calX^*})$ and
$(\ncp{\C}{\calX},\shuffle,1_{\calX^*})$ are algebraically disjoint over $\C$ and
\begin{eqnarray*}
(\ncp{\C[\{x^*\}_{x\in\calX}]}{\calX},\shuffle,1_{\calX^*})
&\cong&(\C[\{x^*\}_{x\in\calX}][\Lyn\calX],\shuffle,1_{\calX^*})\\
&\cong&(\C[\{x^*,l\}_{x\in\calX,l\in\Lyn\calX}],\shuffle,1_{\calX^*})
\end{eqnarray*}
which is generated by the transcendent basis $\{x^*,l\}_{x\in\calX,l\in\Lyn\calX}$ over $\C$. 
\item Let $K:=\C[\{f(x^*)\}_{x\in\calX}]$and $F:=\C[\{f(l)\}_{l\in\Lyn\calX}]$.

Let $f$ be the shuffle morphism
$(\ncp{\C[\{x^*\}_{x\in\calX}]}{\calX},\shuffle,1_{\calX^*})\longrightarrow(\calA,\times,1_{\calA})$.

Then the following assertions are equivalent
\begin{enumerate}
\item\label{CN} The morphism $f$ is injective.
\item\label{CS} The algebras $K$ and $F$, satisfying $K\cap F=\C.1_{\calA}$, are generated by the
transcendent bases $\{f(x^*)\}_{x\in\calX}$ and $\{f(l)\}_{l\in\Lyn\calX}$, respectively, over $\C$.
\end{enumerate}

Hence, if \ref{CN}, or \ref{CS}, holds then $F,K$ are algebraically disjoint over $\C$ and
\begin{eqnarray*}
\C[\{f(x^*)\}_{x\in\calX}][\{f(l)\}_{l\in\Lyn\calX}]\cong\C[\{f(x^*),f(l)\}_{x\in\calX,l\in\Lyn\calX}]
\end{eqnarray*}
which is generated by the transcendent basis $\{f(x^*),f(l)\}_{x\in\calX,l\in\Lyn\calX}$ over $\C$.
\end{enumerate}  
\end{lemma}

\begin{proof}
\begin{enumerate}
\item Recall that the algebras $(\C[\{x^*\}_{x\in\calX}],\shuffle,1_{\calX^*})$
and $(\ncp{\C}{\calX},\shuffle,1_{\calX^*})$ are generated, respectively, by the
transcendent bases $\{x^*\}_{x\in\calX}$ \cite{PVNC} and $\Lyn\calX$ \cite{reutenauer}.
Moreover, $\{x^*\}_{x\in\calX}$ is also algebraically independent over $\ncp{\C}{\calX}$
\cite{PVNC} and then $\C[\{x^*\}_{x\in\calX}]\cap\ncp{\C}{\calX}=\C.1_{\calX^*}$.
It follows the then expected results.
\item Straightforward.
\end{enumerate}
\end{proof}

Now, for any $r\ge1$, let us consider the following differential form
\begin{eqnarray}\label{omega}
\omega_r(z)=u_{y_r}(z)dz&\mbox{with}&u_{y_r}\in\calC_0\subset\calA.
\end{eqnarray}
Let us also consider the following noncommutative differential equation (see \cite{PVNC})
\begin{eqnarray}\label{NCDE_abs}
{\bf d}S=MS;&\scal{S}{1_{\calX^*}}=1_{\calA},
&\mbox{where }M=\sum_{x\in{\calX}}u_x x\in\widehat{\calC_0{\calX}},
\end{eqnarray}
where ${\bf d}$ is the differential operator on $\ncs{\calA}{{\calX}}$ extending $\partial$ as follows: 
\begin{eqnarray}
\forall S=\sum_{w\in\calX^*}\scal{S}{w}w\in\ncs{\calA}{\calX},&&{\bf d}S=\sum_{w\in\calX^*}(\partial\scal{S}{w})w.
\end{eqnarray}

In order to prove Proposition \ref{plein}, Theorems \ref{independence} and \ref{Indexation} below,
we use  the following lemma, a particular case of a general localization result to be proved in a
forthcoming paper \cite{PVNC}.

\begin{lemma}\label{ind_lin}
Suppose that the $\C$-commutative ring $\calA$ is without zero divisors and equipped with
a differential operator $\partial$ such that $\C=\ker\partial$.

Let $S\in\ncs{\calA}{{\calX}}$ be a group-like solution of \eqref{NCDE_abs}, in the following form
\begin{eqnarray*}
S=1_{\calX^*}+\sum_{w\in{\calX}^*\calX}\scal{S}{w}w
=1_{\calX^*}+\sum_{w\in{\calX}^*\calX}\scal{S}{S_w}P_w
=\prod_{l\in\Lyn{\calX}}^{\searrow}e^{\scal{S}{S_l}P_l}.
\end{eqnarray*}
Then
\begin{enumerate}
\item If $H\in\ncs{\calA}{{\calX}}$ is another group-like solution of \eqref{NCDE_abs}
then there exists $C\in\ncs{\calL ie_{\calA}}{{\calX}}$ such that $S=He^C$ (and conversely).
\item The following assertions are equivalent
\begin{enumerate}
\item\label{item1} $\{\scal{S}{w}\}_{w\in{\calX}^*}$ is $\calC_0$-linearly independent,
\item\label{item2} $\{\scal{S}{l}\}_{l\in\Lyn{\calX}}$ is $\calC_0$-algebraically independent,
\item\label{item3} $\{\scal{S}{x}\}_{x\in{\calX}}$ is $\calC_0$-algebraically independent,
\item\label{item4} $\{\scal{S}{x}\}_{x\in{\calX}\cup\{1_{{\calX}^*}\}}$ is $\calC_0$-linearly independent,
\item\label{item5} The family $\{u_x\}_{x\in{\calX}}$ is such that, for $f\in\mathrm{Frac}(\calC_0)$
and $(c_x)_{x\in\calX}\in\C^{(\calX)}$,
\begin{eqnarray*}
\sum_{x\in{\calX}}c_xu_x=\partial f&\Rightarrow&(\forall x\in{\calX})(c_x=0).
\end{eqnarray*}
\item\label{item6} The family $(u_x)_{x\in{\calX}}$ is free over $\C$ and
$\partial\mathrm{Frac}(\calC_0)\cap\span_{\C}\{u_x\}_{x\in{\calX}}=\{0\}$.
\end{enumerate}
\end{enumerate}
\end{lemma}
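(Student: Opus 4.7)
My plan is to treat the two items separately. For (1), I form $T := H^{-1}S$: since $\scal{H}{1_{\calX^*}} = 1_\calA$, $H$ is invertible for the concatenation product. Differentiating $HH^{-1}=1_{\calX^*}$ and substituting $\mathbf{d}H = MH$ yields $\mathbf{d}(H^{-1}) = -H^{-1}M$, hence $\mathbf{d}T = -H^{-1}MS + H^{-1}MS = 0$. Every coefficient of $T$ then lies in $\ker\partial = \C$, so $T \in \ncs{\C}{\calX}$. Since $H$ and $S$ are group-like and group-like elements form a group under concatenation, $T$ itself is group-like with constant term $1$; by Ree's theorem, such a series is the exponential of a Lie series, giving $T = e^C$ for some $C \in \ncs{\calL ie_\C}{\calX} \subset \ncs{\calL ie_\calA}{\calX}$, whence $S = He^C$. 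The converse is immediate because $\mathbf{d}(e^C) = 0$ when $C$ has constant coefficients.

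For (2), I first establish $(a)\Leftrightarrow(b)$ and then close the chain $(b)\Rightarrow(c)\Rightarrow(d)\Rightarrow(e)\Leftrightarrow(f)\Rightarrow(b)$. The equivalence $(a)\Leftrightarrow(b)$ uses the group-like property: the assignment $\chi: w \mapsto \scal{S}{w}$ extends $\calC_0$-linearly to a shuffle character $\chi:(\ncp{\calC_0}{\calX},\shuffle)\to\calA$, and by Radford's theorem the shuffle algebra is the free commutative polynomial algebra on the Lyndon words themselves, so injectivity of $\chi$ on the monomial basis $\{w\}_{w\in\calX^*}$ (condition (a)) is precisely algebraic independence of $\{\chi(l)\}_{l\in\Lyn\calX}$ over $\calC_0$ (condition (b)). The implications $(b)\Rightarrow(c)\Rightarrow(d)$ are immediate since letters are Lyndon and algebraic independence of a subfamily implies linear independence of that subfamily augmented by the constant~$1$. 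For $(d)\Rightarrow(e)$, assume $\sum_x c_x u_x = \partial f$ with $c_x \in \C$ not all zero and $f = f_1/f_2 \in \mathrm{Frac}(\calC_0)$; since $\partial\scal{S}{x} = u_x$, the element $\sum_x c_x \scal{S}{x} - f$ lies in $\ker\partial = \C$, say equal to $\lambda$, and multiplying by $f_2$ yields the nontrivial $\calC_0$-linear relation $\sum_x (c_x f_2)\scal{S}{x} - (f_1 + \lambda f_2)\cdot 1 = 0$ (absence of zero divisors prevents the $c_x f_2$ from all vanishing), contradicting (d). The equivalence $(e)\Leftrightarrow(f)$ merely rephrases ``no nontrivial relation $\sum c_x u_x \in \partial\mathrm{Frac}(\calC_0)$'' as ``$(u_x)$ is $\C$-free and $\partial\mathrm{Frac}(\calC_0)\cap\span_\C\{u_x\}=\{0\}$''.

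The decisive implication $(f)\Rightarrow(b)$ I would obtain from the Kolchin--Ostrowski / Rosenlicht theorem on algebraic independence of integrals: condition (f) asserts exactly that $\{u_x\}_{x\in\calX}$ is $\C$-linearly independent modulo $\partial\mathrm{Frac}(\calC_0)$, so the classical version of the theorem yields algebraic independence of the antiderivatives $\{\scal{S}{x}\}_{x\in\calX}$ over $\mathrm{Frac}(\calC_0)$, hence over $\calC_0$. To propagate this to all Lyndon indices, I would argue inductively on the weight, exploiting that $\partial\scal{S}{l}$ is a $\calC_0$-polynomial in the shorter coefficients $\{\scal{S}{l'}\}_{|l'|<|l|}$ with $u_x$-coefficients: a hypothetical algebraic relation at weight $n$ would descend, via differentiation and the group-like (shuffle) constraints, to a forbidden relation at strictly lower weight or among the $u_x$ themselves. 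This yields (b) and closes the cycle.

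The main obstacle is precisely this inductive propagation of Kolchin--Ostrowski. A single application controls only first-order antiderivatives, whereas the argument must chase algebraic relations through arbitrarily nested iterated integrals, at each stage localizing to a suitable intermediate differential field containing the coefficients already produced and reapplying the theorem to the newly introduced transcendentals, all while ensuring that the shuffle relations built into the group-like structure do not create accidental dependencies. This localization step is exactly the content of the general result announced in the forthcoming reference, which the authors invoke here in order to bypass the full verification.
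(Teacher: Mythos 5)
Your proposal is correct and follows the same skeleton as the paper's (sketched) proof: both arguments run a cycle through the conditions and both ultimately defer the one genuinely hard implication to an external independence theorem. The differences are worth recording. For item (1) you give the full computation ($T=H^{-1}S$, $\mathbf{d}T=0$, Ree's theorem), where the paper simply cites \cite{orlando}; your version even yields the sharper conclusion $C\in\ncs{\calL ie_{\C}}{\calX}$. For item (2) the paper outsources the arrows $\ref{item4}\Rightarrow\ref{item5}$, $\ref{item5}\Leftrightarrow\ref{item6}$ \emph{and} $\ref{item6}\Rightarrow\ref{item1}$ wholesale to Theorem 1 of \cite{Linz} (after transporting everything to $\mathrm{Frac}(\calA)$ by denominator chasing), whereas you prove $\ref{item1}\Leftrightarrow\ref{item2}$ explicitly via the shuffle-character property of a group-like series plus Radford's theorem, and $\ref{item4}\Rightarrow\ref{item5}$ by the direct primitive-plus-constant argument; these elementary fillings are sound (modulo the same tacit assumption, shared with the paper, that the constants of $\mathrm{Frac}(\calA)$ are still $\C$). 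The decisive step $\ref{item6}\Rightarrow\ref{item2}$ you attack via Kolchin--Ostrowski followed by an induction on weight, and you correctly identify that the inductive localization is exactly what the basic triangle theorem of \cite{Linz} and its localized version \cite{LBTT} supply; since the paper's own proof is explicitly a sketch resting on the same citation, your proposal matches it in substance and is, if anything, more self-contained on the easy arrows.
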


\begin{proof}[Sketch]
The first item has been treated in \cite{orlando}. The second is a group-like
version of the abstract form of Theorem 1 of \cite{Linz}. It goes as follows
\begin{enumerate}[\textbullet]
\item due to the fact that $\calA$ is without zero divisors, we have the following embeddings
$\calC_0\subset\mathrm{Frac}(\calC_0)\subset\mathrm{Frac}(\A)$, $\mathrm{Frac}(\A)$
is a differential field, and its derivation can still be denoted by $\partial$
as it induces the previous one on $\calA$,
\item the same holds for $\ncs{\calA}{{\calX}}\subset\ncs{\mathrm{Frac}(\A)}{\calX}$ and ${\bf d}$
\item therefore, equation \eqref{NCDE_abs} can be transported in $\ncs{\mathrm{Frac}(\A)}{\calX}$
and $M$ satisfies the same condition as previously. 
\item Equivalence between \ref{item1}-\ref{item4} comes from the fact that $\calC_0$ is without zero divisors
and then, by denominator chasing, linear independances w.r.t $\calC_0$ and $\mathrm{Frac}(\calC_0)$ are equivalent.
In particular, supposing condition \ref{item4}, the family $\{\scal{S}{x}\}_{x\in{\calX}\cup\{1_{{\calX}^*}\}}$
(basic triangle) is  $\mathrm{Frac}(\calC_0)$-linearly independent which imply, by the Theorem 1 of \cite{Linz},
condition \ref{item5},
\item still by Theorem 1 of \cite{Linz}, \ref{item5} is equivalent to \ref{item6}, implying that
$\{\scal{S}{w}\}_{w\in{\calX}^*}$ is $\mathrm{Frac}(\calC_0)$-linearly independent which induces
$\calC_0$-linear independence ({\it i.e.} \ref{item1}).
\end{enumerate}  
\end{proof}

Now, let $\calA=\calH(\Omega)$, the ring of holomorphic functions on a simply connected domain
$\Omega\subset\C$ ($1_{\calH(\Omega)}$ is its neutral element). With the notations in \eqref{omega}
and for any path $z_0\path z$ in $\Omega$, let
$\alpha_{z_0}^z:(\ncs{\C^{\mathrm{rat}}}{\calX},\shuffle,1_{\calX^*})\longrightarrow(\calH(\Omega),\times,1_{\calA})$
be the morphism defined, for any $x_{i_1}\ldots x_{i_k})\in{\calX^*}$, by\cite{PVNC}
\begin{eqnarray}\label{notation}
\alpha_{z_0}^z(x_{i_1}\ldots x_{i_k})=\int_{z_0}^z\omega_{{i_1}}(z_1)\ldots\int_{z_0}^{z_{k-1}}\omega_{{i_k}}(z_k)
&\mbox{and}&\alpha_{z_0}^z(1_{\calX^*})=1_{\calH(\Omega)},
\end{eqnarray}
satisfying $\alpha_{z_0}^z(u\shuffle v)=\alpha_{z_0}^z(u)\alpha_{z_0}^z(v)$, for $u,v\in\calX^*$.
By a Ree's theorem \cite{reutenauer}, the Chen series of $\{\omega_r\}_{r\ge1}$ and
along the path $z_0\path z$ in $\Omega$ is group-like:
\begin{eqnarray}\label{Chen}
C_{z_0\path z}=\sum_{w\in\calX^*}\alpha_{z_0}^z(w)w
=\prod_{l\in\Lyn\calX}^{\searrow}e^{\alpha_{z_0}^z(S_l)P_l}\in\ncs{\calH(\Omega)}{\calX}.
\end{eqnarray}
Since
$\partial\alpha_{z_0}^z(x_{i_1}\ldots x_{i_k})=u_{i_1}(z)\alpha_{z_0}^z(x_{i_2}\ldots x_{i_k})$
then $C_{z_0\path z}$ is a solution of \eqref{NCDE_abs}.

\begin{remark}\label{evaluation}
For any $w\in\calX\calX^*$, the value of $\alpha_{z_0}^z(w)$ depends on $\{\omega_i\}_{i\ge1}$, or equivalently
on $\{u_x\}_{x\in\calX}$ and if $f_x(z)=\alpha_{z_0}^z(x)$ then, for any $n\ge0$, one has \cite{IMACS}
\begin{eqnarray*}
\alpha_{z_0}^z(x^n)=\alpha_{z_0}^z(x^{\shuffle n}/n!)=f_x^n(z)n!&\mbox{and then}&F_x(z):=\alpha_{z_0}^z(x^*)=e^{f_x(z)}.
\end{eqnarray*}
\end{remark}

With data in \eqref{omega} and shuffle morphism in \eqref{notation},
we will illustrate a bijection, between
$(\ncp{\C}{\calX}\shuffle\C[\{x^*\}_{x\in\calX}],\shuffle,1_{\calX^*})$,
the subalgebra of noncommutative rational series and a subalgebra of $\calH(\Omega)$
containing the eulerian functions bellow.

\subsection{Families of eulerian functions}\label{eulerianfunctions}

\begin{definition}\label{f+expf}
For any $z\in\C$ such that $\abs{z}<1$, we put
\begin{eqnarray*}
\ell_1(z):=\gamma z-\sum_{k\ge2}\zeta(k)\frac{(-z)^k}k&\mbox{and for}&
r\ge2,\ell_r(z):=-\sum_{k\ge1}\zeta(kr)\frac{(-z^r)^k}k.
\end{eqnarray*}
For any $k\ge1$, let $\Gamma_{y_k}(1+z):=e^{-\ell_k(z)}$ and
$\mathrm{B}_{y_k}(a,b):=\dfrac{\Gamma_{y_k}(a)\Gamma_{y_k}(b)}{\Gamma_{y_k}(a+b)}$.
\end{definition}

\begin{remark}\label{Gamma}
\begin{enumerate}
\item $(\ell_r)_{r\ge1}$ is triangular\footnote{$(g_i)_{i\ge1}$ is said to be {\it triangular}
if the valuation of $g_i,\varpi(g_i),$ equals $i\ge1$. It is easy to check that such a family is
$\C$-linearly free and that is also the case of families such that $(g_i-g(0))_{i\ge1}$ is triangular.}.
So is $(e^{\ell_r}-e^{\ell_r(0)})_{r\ge1}$.

\item For any $z\in\Omega=\C,\abs{z}<1$ and $k\ge1$, using Remark \ref{evaluation}, one has
\begin{eqnarray*}
\begin{array}{|c||c|c|}
\hline
u_{y_k}&\alpha_0^z(y_k)&\alpha_0^z(y_k^*)\\
\hline
\hline
1_{\calH(\Omega)}&z&e^z\\
\hline
(1-z)^{-1}&-\log(1-z)&(1-z)^{-1}\\
\hline
\partial\ell_k&\ell_k(z)&e^{\ell_k(z)}=\Gamma_{y_k}^{-1}(1+z)\\
\hline
e^{\ell_k}\partial\ell_k&e^{\ell_k(z)}=\Gamma_{y_k}^{-1}(1+z)&e^{e^{\ell_k(z)}}\\
\hline
\end{array}
\end{eqnarray*}
\item The function $\ell_1$ is already considered by Legendre
for studying the eulerian Beta and Gamma functions \cite{legendre},
denoted here, repectively, by $\mathrm{B}_{y_1}$ and $\Gamma_{y_1}$.
\item For any $r\ge1$, one has $\partial\ell_r=e^{-\ell_r}\partial e^{\ell_r}$.
\item For any $n\ge0$, one puts classically $\Psi_{n}:=\partial^n\log\Gamma$.
\item Some of these functions cease (unlike $\Gamma$) to be hypertranscendental.
For example\footnote{Indeed, we use the fact that $\Gamma_{y_2}^{-1}(1+x)
=\sin(\mathrm{i}\pi x)/\mathrm{i}\pi x$ (see Example \ref{example1} bellow).}
$y(x)=\Gamma_{y_2}^{-1}(1+x)$ is a solution of $(1-\pi^2x^2)y^2 +2xy\dot y+x^2\dot y^2=1$.
\end{enumerate}
\end{remark}

Now, for any $r\ge1$, let $G_r$ (resp. $\calG_r$) denote the set (resp. group) of solutions,
$\{\xi_0,\ldots,\xi_{r-1}\}$, of the equation $z^r=(-1)^{r-1}$ (resp. $z^r=1$).
For $r,q\ge1$, we will need also a system $\mathbb{X}$ of representatives of
$\calG_{qr}/\calG_r$, {\it i.e.} $\mathbb{X}\subset \calG_{qr}$ such that
\begin{eqnarray*}
\calG_{qr}=\biguplus_{\tau\in\mathbb{X}}\tau\calG_r.
\end{eqnarray*}
It can also be assumed that $1\in\mathbb{X}$ as with $\mathbb{X}=\{e^{2\mathrm{i}k\pi/qr}\}_{0\le k\le q-1}$.

\begin{remark}\label{parity}
If $r$ is odd then $z^r=(-1)^{r-1}=1$ and $G_r=\calG_r$ as being a group otherwise
$G_r=\xi\calG_r$ as being an orbit, where $\xi$ satisfies $\xi^r=-1$
(this is equivalent to $\xi\in\calG_{2r}$ and $\xi\notin\calG_r$).
\end{remark}

\begin{proposition}\label{Weierstrass2}
\begin{enumerate}
\item\label{generator} For $r\ge1,\chi\in\calG_r$ and $z\in\C,|z|<1$, the functions $\ell_r$ and $e^{\ell_r}$ have the symmetry,
$\ell_r(z)=\ell_r(\chi z)$ and $e^{\ell_r(z)}=e^{\ell_r(\chi z)}$. In particular, for $r$ even, as $-1\in\calG_r$, these functions are even.
\item For $|z|<1$, we have
\begin{eqnarray*}
\ell_r(z)=-\sum_{\chi\in G_r}\log(\Gamma(1+\chi z))&\mbox{and}&
e^{\ell_r(z)}=\prod_{\chi\in G_r}e^{\gamma\chi z}\prod_{n\ge1}\Bigl(1+\frac{\chi z}{n}\Bigr)e^{-{\chi z}/n}.
\end{eqnarray*}
\item For any odd $r\ge2$,
\begin{eqnarray*}
\Gamma_{y_r}^{-1}(1+z)=e^{\ell_r(z)}=\Gamma^{-1}(1+z)\prod_{\chi\in G_r\setminus{\{1\}}}e^{\ell_1(\chi z)}.
\end{eqnarray*}
\item In general, for any odd or even $r\ge2$,
\begin{eqnarray*}
e^{\ell_r(z)}=\prod_{\chi\in G_r}e^{\ell_1(\chi z)}=\prod_{n\ge1}\Bigl(1+\frac{z^r}{n^r}\Bigr).
\end{eqnarray*}
\end{enumerate}
\end{proposition}

\begin{proof}
The results are known for $r=1$ ({\it i.e.} for $\Gamma^{-1}$). For $r\ge2$, we get
\begin{enumerate}
\item By Definition \ref{f+expf}, with $\chi\in \calG_r$, we get
\begin{eqnarray*}
\ell_r(\chi z)=-\sum_{k\ge1}\zeta(kr)\frac{(-\chi^rz^r)^k}k
=-\sum_{k\ge1}\zeta(kr)\frac{(-z^r)^k}k=\ell_r(z),
\end{eqnarray*}
thanks to the fact that, for any $\chi\in\calG_r$, one has $\chi^r=1$.
In particular, if $r$ is even then $\ell_r(z)=\ell_r(-z)$, {\it i.e.} $\ell_r$ is even.
\item If $r$ is odd, as $G_r=\calG_r$ and, applying the symmetrization principle\footnote{
Within the same disk of convergence as $f$, one has, $f(z)=\sum_{n\ge1}a_nz^n$ and 
$\sum_{\chi\in\calG_r}f(\chi z)=r\sum_{k\ge1}a_{rk}z^{rk}$.}, we get 
\begin{eqnarray*}
-\sum_{\chi\in G_r}\ell_1(\chi z)=&-\displaystyle\sum_{\chi\in\calG_r}\ell_1(\chi z)\\
=&r\displaystyle\sum_{k\ge1}\zeta(kr)\frac{(-z)^{kr}}{kr}&=\sum_{k\ge1}\zeta(kr)\frac{(-z^r)^k}{k}.
\end{eqnarray*}
The last term being due to, precisely, $r$ is odd. If $r$ is even, we have the
orbit $G_r=\xi\calG_r$ (still with $\xi^r=-1$) and then, by the same principle,
\begin{eqnarray*}
-\sum_{\chi\in\calG_r}\ell_1(\chi\xi z)=&r\displaystyle\sum_{k\ge1}\zeta(kr)\frac{(-\xi z)^{kr}}{kr}\cr
=&\displaystyle\sum_{k\ge1}\zeta(kr)\frac{\bigl((-\xi z)^{r}\bigr)^k}{k}&=\sum_{k\ge1}\zeta(kr)\frac{\bigl(-z^{r}\bigr)^k}{k}.
\end{eqnarray*}
\item Straightforward.
\item Due to the fact that the external product is finite, we get
\begin{eqnarray*}
e^{\ell_r(z)}
=\overbrace{\Bigl(\prod_{\chi\in\calG_r}e^{\gamma\chi z}\Bigr)}^{=1}
\prod_{n\ge1\atop\chi\in\calG_r}\Bigl(1+\frac{\chi z}{n}\Bigr)e^{-{\chi z}/n}
=\overbrace{\Bigl(\prod_{n\ge1\atop\chi\in\calG_r}e^{-\frac{\chi z}n}\Bigr)}^{=1}
\prod_{n\ge1\atop\chi\in\calG_r}\Bigl(1+\frac{\chi z}{n}\Bigr).
\end{eqnarray*}
Using the elementary symmetric functions of $G_r$, we get the expected result.
\end{enumerate}
\end{proof}

\begin{proposition}\label{plein}
Let $L:=\span_\C\{\ell_r\}_{r\ge1}$ and $E:=\span_{\C}\{e^{\ell_r}\}_{r\ge1}$.
Let $\C[L]$ and $\C[L]$ be their respective algebra. One has
\begin{enumerate}
\item\label{free} The family $(\ell_r)_{r\ge1}$ is $\C$-linearly free and free from $1_{\calH(\Omega)}$.
\item\label{free2} The family $(\ell_r)_{r\ge1}$ and $(e^{\ell_r})_{r\ge1}$ is $\C$-linearly free and free from $1_{\calH(\Omega)}$.
\item The families $(\ell_r)_{r\ge1}$ and $(e^{\ell_r})_{r\ge1}$ are $\C$-algebraically independent.
\item For any $r\ge1$, one has
\begin{enumerate}
\item The functions $\ell_r$ and $e^{\ell_r}$ are $\C$-algebraically independent.
\item The function $\ell_r$ is holomorphic on the open unit disc, $D_{<1}$,
\item The function $e^{\ell_r}$ (resp. $e^{-\ell_r}$) is entire (resp. meromorphic),
and admits a countable set of isolated zeroes (resp. poles) on the complex plane
which is expressed as $\biguplus_{\chi\in G_r}\chi\Z_{\le-1}$.
\end{enumerate}
\item One has $E\cap L=\{0\}$ and, more generally, $\C[E]\cap\C[L]=\C.1_{\calH(\Omega)}$.
\end{enumerate}
\end{proposition}

\begin{proof}
\begin{enumerate}
\item Suppose that there is $(a_r)_{r\ge1}\in\C^{(\N)}$ such that
\begin{eqnarray*}
\sum_{r\ge1}a_r\ell_r(z)=a_1\gamma z-\sum_{k\ge2}a_1\zeta(k)\frac{(-1)^k}{k}z^k
-\sum_{r\ge2}\sum_{k\ge1}a_r\zeta(kr)\frac{(-1)^k}{k}z^{rk}=0,
\end{eqnarray*}
in which, since $\gamma\neq0$ then $a_1=0$. It follows that
\begin{eqnarray*}
\sum_{r\ge2}\sum_{k\ge1}a_r\zeta(kr)\frac{(-1)^k}{k}z^{rk}=0
\end{eqnarray*}
in which $\scal{\mathrm{LS}}{z^2}=a_2\zeta(2)/2$. Since $\zeta(2)\neq9$ then $a_2=0$.
It also follows that
\begin{eqnarray*}
\sum_{r\ge3}\sum_{k\ge1}a_r\zeta(kr)\frac{(-1)^k}{k}z^{rk}=0.
\end{eqnarray*}
In similar way, one proves that $a_r=0$, for $r\in\mathbb{N}^+$. Hence, $(\ell_r)_{r\geq 1}$ is $\C$-free.

\item Suppose that there is $(b_i)_{i\ge1}\in\C^{(\N)}$ such that
\begin{eqnarray*}
\sum_{i\ge1}a_ie^{\ell_i}=0&\mbox{and then}&\sum_{i\ge1}a_i\dot\ell_i=0
\end{eqnarray*}
(taking the logarithmic derivative). By integration, one deduces then $(\ell_r)_{r\geq 1}$
is $\C$-linearly dependent contradicting with the item \ref{free}.
It remains that $(e^{f_i})_{i\in I}$ is $\C$-free.

\item Using Chen series of $\{\omega_r\}_{r\ge1}$ defined, as in Remark \ref{Gamma}, by $u_{x_r}=e^{\ell_r}\partial\ell_r$
(resp. $u_{x_r}=\partial\ell_r$), via items \ref{item1} or \ref{item2} of Lemma \ref{ind_lin}, $\{e^{\ell_r}\}_{r\ge1}$
(resp. $\{\ell_r\}_{r\ge1}$) is the $\C$-algebraically independent.

\item \begin{enumerate}
\item Since $\ell_r(0)=0,\partial e^{\ell_r}=e^{\ell_r}\partial\ell_r$
then $\ell_r$ and $e^{\ell_r}$ are $\C$-algebraically independent.

\item One has $e^{\ell_1(z)}=\Gamma^{-1}(1+z)$ which proves the claim for $r=1$.
For $r\ge2$, note that $1\leq\zeta(r)\le\zeta(2)$ which implies that the radius of convergence
of the exponent is $1$ and means that $\ell_r$ is holomorphic on the open unit disc.
This proves the claim.

\item $e^{\ell_r(z)}=\Gamma_{y_r}^{-1}(1+z)$ (resp. $e^{-\ell_r(z)}=\Gamma_{y_r}(1+z)$) is
entire (resp. meromorphic) as finite product of entire (resp. meromorphic) functions and,
by Proposition \ref{Weierstrass2}, Weierstrass factorization yields zeroes (resp. poles).
\end{enumerate}

\item Let $f\in E\cap L$ and then there is $\{c_y\}_{y\in Y}$ and $\{d_y\}_{y\in Y}\in\C^{(Y)}$ such that
\begin{eqnarray*}
f=\sum_{r\ge1}c_{y_r}\ell_r=\sum_{r\ge1}d_{y_r}e^{\ell_r}
\end{eqnarray*}
If $f\neq0$ then $\ell_{r_0},e^{\ell_{r_0}}$ could be linearly dependent, for some $r_0\ge1$, contradicting
with item \ref{free}. Hence, $E\cap L=\{0\}$.

$\C[E]$ (resp. $\C[F]$) is generated freely by $(e^{\ell_r})_{r\ge1}$ (resp. $(\ell_r)_{r\ge1}$) which
are entire (resp. holomorphic on $D_{<1}$) functions. Moreover, any $\C[E]\ni f\neq c1_{\Omega}$
($c\in\C$) is entire and then $f\notin\C[L]$ (and conversely). It follows the expected result.
\end{enumerate}
\end{proof}

By Lemma \ref{general}, Proposition \ref{plein} and Remark \eqref{Gamma}, one deduces then
\begin{corollary}
The map $\alpha_0^z:(\ncp{\C}{Y},\shuffle,1_{Y^*})\longrightarrow(\mathrm{span}_{\C}\{\alpha_0^z(w)\}_{w\in Y^*},\times,1_{\calH(\Omega)})$
is injective, for the inputs $\{\partial\ell_r\}_{r\ge1}$ or $\{e^{\ell_r}\partial\ell_r\}_{r\ge1}$, and then
$\{\alpha_0^z(w)\}_{w\in Y^*}$ (resp. $\{\alpha_0^z(l)\}_{l\in\Lyn Y}$ is linearly (resp. algebraically) independent over $\C$.
\end{corollary}

From now on the countable set of isolated zeros (resp. poles) of the entire (resp. meromorphic)
function $e^{\ell_r}$ (resp. $e^{-\ell_r}$) is denoted by $\calO(e^{\ell_r})$. We have 
\begin{eqnarray}\label{zeroes}
\calO(e^{\ell_r})=\biguplus_{\chi\in G_r}\chi\Z_{\le-1}.
\end{eqnarray}

\begin{example}
One has
\begin{eqnarray*}
 \calO(e^{\ell_1})&=&\Z_{\le-1},\cr
 \calO(e^{\ell_2})&=&-\mathrm{i}\Z_{\le-1}\uplus\mathrm{i}\Z_{\le-1}=\mathrm{i}\Z_{\neq0},\cr
\calO(e^{\ell_3}) &=& \Z_{\le-1}\uplus\mathrm{j}\Z_{\le-1}\uplus\mathrm{j}^2\Z_{\le-1},\cr
\calO(e^{\ell_4}) &=& (1+\mathrm{i})/\sqrt{2}\Z_{\neq0}\uplus(1-\mathrm{i})/\sqrt{2}\Z_{\neq0}.
\end{eqnarray*}
\end{example}

\begin{proposition}\label{Weierstrass3}
Let $\mathbb{X}$ denote any system of representatives of $\calG_{qr}/\calG_r$.
\begin{enumerate}
\item For any $r\ge1$ and odd $q\ge1$, one has, for $\abs{z}<1$,
\begin{eqnarray*}
e^{\ell_{qr}(z)}=\prod_{\chi\in\mathbb{X}}e^{\ell_{r}(\chi z)},&\mbox{or equivalently,}&
\Gamma^{-1}_{y_{qr}}(1+z)=\prod_{\chi\in\mathbb{X}}\Gamma^{-1}_{y_r}(1+\chi z).
\end{eqnarray*}
\item $e^{\ell_r}$ divides $e^{\ell_{qr}}$ if and only if $q$ is odd.
\item The full symmetry group of $e^{\ell_r}$ for the representation $s*f[z]=f(sz)$ is $\calG_r$.
\end{enumerate}
\end{proposition}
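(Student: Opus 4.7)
The plan is to treat the three parts in turn; Part 1 is the technical core, while Parts 2 and 3 follow from combining it with the zero-set description in \eqref{zeroes} and the symmetry established in Proposition \ref{Weierstrass2}.

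For Part 1, I would compute $\sum_{\chi\in\mathbb{X}}\ell_r(\chi z)$ directly from Definition \ref{f+expf}. Interchanging the two (absolutely convergent, for $|z|<1$) sums yields a coefficient of the form $\sum_{\chi\in\mathbb{X}}\chi^{kr}$ against each monomial $z^{kr}$. The decisive observation is that $\mathbb{X}$ is a transversal of $\calG_{qr}/\calG_r$ and the kernel of the homomorphism $\chi\mapsto\chi^r$ on $\calG_{qr}$ is exactly $\calG_r$, so the induced map $\mathbb{X}\to\calG_q$ is a bijection. By orthogonality of characters, $\sum_{\chi\in\mathbb{X}}\chi^{kr}=\sum_{\sigma\in\calG_q}\sigma^k$ equals $q$ when $q\mid k$ and $0$ otherwise. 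Reindexing $k=mq$ and invoking $(-1)^{mq}=(-1)^m$ — which is where \emph{$q$ odd} enters decisively — the sum collapses to $\ell_{qr}(z)$, and exponentiating gives the stated product. For $r=1$ one must additionally handle the residual term $\gamma z\sum_{\chi\in\calG_q}\chi$, which vanishes for $q\ge 2$.

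For Part 2, the ``if'' direction is immediate from Part 1: choosing $1\in\mathbb{X}$ (as permitted), one obtains $e^{\ell_{qr}(z)}=e^{\ell_r(z)}\prod_{\chi\in\mathbb{X}\setminus\{1\}}e^{\ell_r(\chi z)}$, so $e^{\ell_r}$ divides $e^{\ell_{qr}}$ in $\calH(\C)$. For the ``only if'' direction I would compare zero sets via \eqref{zeroes}: since both functions have simple zeros, divisibility forces $G_r\subset G_{qr}$; but if $q$ is even and $\chi\in G_r$, then $\chi^{qr}=((-1)^{r-1})^q=1$, whereas elements of $G_{qr}$ satisfy $z^{qr}=(-1)^{qr-1}=-1$ (as $qr$ is even). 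Thus $G_r\cap G_{qr}=\emptyset$, contradicting the required inclusion.

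For Part 3, Proposition \ref{Weierstrass2}(1) already yields $\calG_r\subseteq\mathrm{Sym}(e^{\ell_r})$. For the reverse, if $e^{\ell_r(sz)}=e^{\ell_r(z)}$ holds identically then the zero set $\bigcup_{\chi\in G_r}\chi\Z_{\le-1}$ is preserved by $z\mapsto s^{-1}z$. Its elements of minimum modulus are exactly $-G_r$, all of modulus $1$, which forces $|s|=1$ and $sG_r=G_r$. When $r$ is odd, $G_r=\calG_r$ is a group, so $s\in\calG_r$; when $r$ is even, $G_r=\xi\calG_r$ and $s\xi\calG_r=\xi\calG_r$ again forces $s\in\calG_r$. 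The main anticipated obstacle is the bookkeeping in Part 1 — notably selecting a convenient transversal (e.g.\ $\{e^{2\mathrm{i}k\pi/(qr)}\}_{0\le k\le q-1}$) so that the bijection $\mathbb{X}\to\calG_q$ and the parity identity $(-1)^{mq}=(-1)^m$ both read off transparently; once this is set up, Parts 2 and 3 reduce to short, essentially formal arguments.
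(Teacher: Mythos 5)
Your proposal is correct, but the core of Part~1 follows a genuinely different route from the paper. The paper never expands $\ell_r(\chi z)$ as a power series at this stage: it invokes Proposition~\ref{Weierstrass2}(2) to write $\ell_{qr}(z)=\sum_{\chi\in G_{qr}}\ell_1(\chi z)$, picks $\xi$ with $\xi^r=(-1)^{r-1}$, and uses the two identities $G_{qr}=\xi\calG_{qr}$ (this is precisely where $q$ odd enters, since $\xi^{qr}=(-1)^{q(r-1)}=(-1)^{qr-1}$ iff $q$ is odd) and $\calG_{qr}=\biguplus_{\chi\in\mathbb{X}}\chi\calG_r$ to regroup the sum as $\sum_{\chi\in\mathbb{X}}\ell_r(\chi z)$. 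You instead work directly from Definition~\ref{f+expf}, reduce $\sum_{\chi\in\mathbb{X}}\chi^{kr}$ to a root-of-unity sum over $\calG_q$ via the isomorphism $\calG_{qr}/\calG_r\cong\calG_q$, and locate the oddness of $q$ in the parity identity $(-1)^{mq}=(-1)^m$. Your computation is self-contained (it does not rely on the decomposition of $\ell_{qr}$ into translates of $\ell_1$, hence not on the symmetrization principle) and makes the failure for even $q$ completely explicit, at the price of more series bookkeeping and the separate treatment of the $\gamma z$ term when $r=1$; the paper's argument is shorter once Proposition~\ref{Weierstrass2} is available. Your Parts~2 and~3 coincide with the paper's (zero sets on the unit circle via \eqref{zeroes}, then $\lvert s\rvert=1$ and $sG_r=G_r$); only a cosmetic remark there: the unit-modulus zeros are $-G_r$ rather than $G_r$, so the inclusion forced by divisibility is $-G_r\subset-G_{qr}$, which is of course equivalent to the one you state.
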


\begin{proof} 
\begin{enumerate}
\item Let $\xi$ be any root of $z^r=(-1)^{r-1}$, one remarks that, in all cases ($r$ be odd or even), we have
\begin{eqnarray*}
G_r=\xi\calG_r,&G_{qr}=\xi\calG_{qr},&\calG_{qr}=\biguplus_{\chi\in\mathbb{X}}\chi\calG_r.
\end{eqnarray*}
Then, by Proposition \ref{Weierstrass2}, we have 
\begin{eqnarray*}
\ell_{qr}(z)
=&\displaystyle\sum_{\chi\in G_{qr}}\ell_1(\chi z)\\
=&\displaystyle\sum_{\rho_1\in\calG_{qr}}\ell_1(\xi\rho_1 z)&=\sum_{\chi\in\mathbb{X},\rho_2\in\calG_r}\ell_1(\xi\rho_2\chi z)\\
=&\displaystyle\sum_{\chi\in\mathbb{X}\atop\rho_2\in\calG_r}\ell_1(\xi\rho_2(\chi z))&=\sum_{\chi\in\mathbb{X}}\ell_r(\chi z)\\
=&\ell_r(z)+\displaystyle\sum_{\chi\in\mathbb{X}\setminus\{1\}}\ell_r(\chi z).
\end{eqnarray*}
Last equality assumes that $1\in\mathbb{X}$. Taking exponentials, we get
\begin{eqnarray}\label{OrbFact}
e^{\ell_{qr}(z)}=\prod_{\chi\in\mathbb{X}}e^{\ell_r(\chi z)}
=e^{\ell_r(z)}\prod_{\chi\in\mathbb{X}\setminus\{1\}}e^{\ell_r(\chi z)}.
\end{eqnarray}
Again, first equality is general and the last assumes that $1\in\mathbb{X}$.

\item The fact that $e^{\ell_r}$ divides $e^{\ell_{qr}}$ if $q$ is odd comes from the 
factorization \eqref{OrbFact}.
Now, when $q$ even, it suffices to remark, from \eqref{zeroes}, that the opposite of any solution
of $z^r=-1$ is a zero of\footnote{More precisely, denoting $\mathbb{U}$ the unit circle, one has
$\calO(e^{\ell_r})\cap\mathbb{U}=-G_r\not=\emptyset$.} $e^{\ell_r}$ and 
$\calO(e^{\ell_{qr}})\cap\mathbb{U}=-G_{qr}$. But when $q$ is even one has
$-G_{r}\cap-G_{qr}=\emptyset$. Hence, in this case, $e^{\ell_r}$ cannot divide $e^{\ell_{qr}}$.

\item Let $\calG$ denote the symmetry group of $e^{\ell_r}$ and remark that the distance of $\calO(e^{\ell_r})$
to zero is $1$. Hence, as $\calO(e^{\ell_r}(s.z))=s^{-1}\calO(e^{\ell_r})$, we must have $\calG\subset\mathbb{U}$.
Then, by Remark \ref{parity}, as $\calO(e^{\ell_r})\cap\mathbb{U}=G_r$, we must have $\calG\subset\calG_r$,
the reverse inclusion is exactly the first point of Proposition \ref{Weierstrass2}.
\end{enumerate}
\end{proof}

\begin{example}
\begin{enumerate}
\item For $r=1,q=2,\mathbb{X}=\{1,-1\}$, one has the Euler's complement like formula, \textit{ie.}
$\Gamma_{y_2}(1+\mathrm{i}z)=\Gamma_{y_1}(1+z)\Gamma_{y_1}(1-z)={z\pi}/{\sin(z\pi)}$.
Changing $z\mapsto-\mathrm{i}z$, one also has
$\Gamma_{y_2}(1+z)=\Gamma_{y_1}(1+\mathrm{i}z)\Gamma_{y_1}(1-\mathrm{i}z)$.

\item For $r=2,q=3,\mathbb{X}=\{1,\mathrm{j},\mathrm{j}^2\}$, one has
$\Gamma_{y_{6}}(1+z)=\Gamma_{y_2}(1+z)\Gamma_{y_2}(1+\mathrm{j}z)\Gamma_{y_2}(1+\mathrm{j}^2z)$.
\end{enumerate}
\end{example}

\goodbreak

With the notations of Proposition \ref{plein}, the algebra $\C[L]$ (resp. $\C[E]$) is generated
freely by $(\ell_r)_{r\ge1}$ (resp. $(e^{\ell_r})_{r\ge1}$) which are holomorphic on $D_{<1}$
(resp. entire) functions. Moreover,
\begin{eqnarray}\label{cap1}
E\cap L=\{0\},&\mbox{and more generally},&\C[E]\cap\C[L]=\C.1_{\calH(\Omega)}.
\end{eqnarray}
We are in a position to consider the following differential subalgebras of $(\calH(\Omega),\partial)$: 
\begin{eqnarray}
\calL:=\dext{\C}{(\ell_r^{\pm1})_{r\ge1}}&\mbox{and}&\calE:=\dext{\C}{(e^{\pm\ell_r})_{r\ge1}}.
\end{eqnarray}
Since $\partial\ell_r^{-1}=-\ell_r^{-2}\partial\ell_r$ then $\calL=\C[\{\ell_r^{\pm1},\partial^i\ell_r\}_{r,i\ge1}]$. Let
\begin{eqnarray}
\calL^+:=\C[\{\partial^i\ell_r\}_{r,i\ge1}].
\end{eqnarray}
This $\C$-differential subalgebra $\calL^+$ is an integral domain
generated by holomorphic functions and $\mathrm{Frac}(\calL^+)$ is generated by meromorphic functions. Since there is
$0\neq q_{i,l,k}\in\calL^+$ such that $(\partial^ie^{\pm\ell_k})^l=q_{i,l,k}e^{\pm l\ell_k}$ ($i,l,k\ge1$) then let
\begin{eqnarray}\label{E+}
\calE^+
&:=&\span_{\C}\{
(\partial^{i_1}e^{\pm\ell_{r_1}})^{l_1}\ldots(\partial^{i_k}e^{\pm\ell_{r_k}})^{l_k}\}_{
(i_1,l_1,r_1),\ldots,(i_k,l_k,r_k)\in(\N^*)^3,k\ge1}\cr
&=&\span_{\C}\{q_{i_1,l_1,r_1}\ldots q_{i_k,l_k,r_k}e^{l_1\ell_{r_1}+\ldots+l_k\ell_{r_k}}\}_{
(i_1,l_1,r_1),\ldots,(i_k,l_k,r_k)\in\N^*\times\Z^*\times\N^*,k\ge1}\cr
&\subset&\span_{\calL^+}\{
e^{l_1\ell_{r_1}+\ldots+l_k\ell_{r_k}}\}_{(l_1,r_1),\ldots,(l_k,r_k)\in\Z^*\times\N^*,k\ge1}\cr
&=:&\calC.\label{vide}
\end{eqnarray}
Note that in \eqref{vide}, $\calC$ is a differential subring of $\calA=\calH(\Omega)$
(hence, $\mathrm{Frac}(\calC)$ is a differential subfield of $\mathrm{Frac}(\calA)$) and
\begin{eqnarray}\label{cap2}
\calE^+\cap E=\{0\}.
\end{eqnarray}

\begin{theorem}\label{independence}
\begin{enumerate}
\item The family $(e^{\ell_r})_{r\ge1}$ (resp. $(\ell_r)_{r\ge1}$) is algebraically free over $\calE^+$ (resp. $\calL^+$).
\item $\C[E]$ and $\C[L]$ are algebraically disjoint, within $\calA$.
\end{enumerate}
\end{theorem}

\begin{proof}
\begin{enumerate}
\item Considering the Chen series of the differential forms $\{\omega_r\}_{r\ge1}$
defined, for any $r\ge1$, by $u_{y_r}=e^{\ell_r}\partial\ell_r$.
Let $Q\in\mathrm{Frac}(\calL)\cap E$ (resp. $\mathrm{Frac}(\calC)\cap E$):
\begin{enumerate}
\item since $Q\in E$ then there is $\{c_y\}_{y\in Y}\in\C^{(Y)}$ such that
\begin{eqnarray}\label{test}
Q=\sum_{r\ge1}c_{y_r}e^{\ell_r}&\mbox{and then}&\partial Q=\sum_{r\ge1}c_{y_r}e^{\ell_r}\partial\ell_r,
\end{eqnarray}

\item since $Q\in\mathrm{Frac}(\calL)\supset\calL\supset\C[L]$ (resp. $\mathrm{Frac}(\calC)\supset\calC\supset\calE^+$)
then, by \eqref{cap1} (resp. \eqref{cap2}) it remains that $Q=0$.
\end{enumerate}
Hence, by Proposition \ref{plein}, since $\{e^{\ell_k}\}_{k\ge1}$ is $\C$-free and $Q=0$ then
\begin{enumerate}
\item on the one hand, for any $r\ge1$, one has $c_{y_r}=0$,
\item on the other hand, $\{\alpha_0^z(S_l)\}_{l\in\Lyn Y}$ (including $\{\alpha_0^z(S_y)\}_{y\in Y}$)
is algebraically free over $\calL$ (resp. $\calC$).
\end{enumerate}
It follows that $\{e^{\ell_r}\}_{r\ge1}$ is algebraically free over $\C[L]$ (resp. $\calE^+$).

Now, suppose there is an algebraic relation among $(\ell_k)_{k\ge1}$ over $\calL^+$ in which,
by differentiating and substituting $\partial\ell_k$ by $e^{-\ell_k}\partial e^{\ell_k}$, we get
an algebraic relation among $\{e^{\ell_r}\}_{r\ge1}$ over $\C[L]$ and $\calE^+$ contradicting
with previous results. It follows then $(\ell_k)_{k\ge1}$ is $\calL^+$-algebraically independent.

\item $\{e^{\ell_k}\}_{k\ge1}$ (resp. $\{\ell_k\}_{k\ge1}$) is algebraically independent over $\C[L]$ (resp. $\C[E]$).
Hence, $\{e^{\ell_k},\ell_k\}_{k\ge1}$ generates freely $\C[E+L]$ and $\C[E]\cap\C[L]=\C.1_{\calH(\Omega)}$.

It follows that $\C[E]$ and $\C[L]$ are algebraically disjoint, within $\calA$.
\end{enumerate}
\end{proof}

\begin{corollary}\label{disjoint}
\begin{enumerate}
\item Using the inputs $\{\partial\ell_r\}_{r\ge1}$ (resp. $\{e^{\ell_r}\partial\ell_r\}_{r\ge1}$),
the following morphism is injective (see also Remark \eqref{Gamma})
\begin{eqnarray*}
\alpha_0^z:(\ncp{\calL^+}{Y},\shuffle,1_{Y^*})&\longrightarrow&
(\mathrm{span}_{\calL^+}\{\alpha_0^z(w)\}_{w\in Y^*},\times,1_{\calH(\Omega)}),\\
(\mbox{resp. }\alpha_0^z:(\ncp{\calE^+}{Y},\shuffle,1_{Y^*})
&\longrightarrow&(\mathrm{span}_{\calE^+}\{\alpha_0^z(w)\}_{w\in Y^*},\times,1_{\calH(\Omega)})).
\end{eqnarray*}

Hence, $\{\alpha_0^z(w)\}_{w\in Y^*}$ (resp. $\{\alpha_0^z(l)\}_{l\in\Lyn Y}$) is linearly (resp. algebraically)
independent over $\calL^+$ (resp. $\calE^+$).
\item Using the inputs $\{\partial\ell_r\}_{r\ge1}$ and denoting the set of exchangeable polynomials
(over $Y$ and with coefficients in $\C$) by $\ncp{\C_\mathrm{exc}}{Y}$ (see \cite{PVNC} for example),
the family $\{\alpha_0^z(\lambda)\}_{\lambda\in\Lyn Y\cup\{y_r^*\}_{r\ge1}}$ is $\C$-algebraically
independent and the restricted morphism
$\alpha_0^z:(\ncp{\C_\mathrm{exc}}{Y}\shuffle\C[\{y_r^*\}_{r\ge1}],\shuffle,1_{Y^*})\longrightarrow\C[L+E]$
is bijective.

Hence, $\{(e^{\ell_r})_{r\ge1},(\ell_r)_{r\ge1}\}$ is $\C$-algebraically independent.
\item Let $\calC_k:=\span_{\calL^+}
\{e^{l_1\ell_{r_1}+\ldots+l_k\ell_{r_k}}\}_{(l_1,r_1),\ldots,(l_k,r_k)\in\Z^*\times\N^*}$. Then
\begin{eqnarray*}
\calC=\bigoplus_{k\ge1}\calC_k
\end{eqnarray*}
\end{enumerate}
\end{corollary}

\begin{proof}
\begin{enumerate}
\item It is a consequence of Theorem \ref{independence}.
\item The free algebras $(\ncp{\C_{\mathrm{exc}}}{Y},\shuffle,1_{Y^*})$ and $(\C[\{y_r\}_{r\ge1}],\shuffle,1_{Y^*})$
are algebraically disjoint and their images by $\alpha_0^z$, by Proposition \ref{plein}, are, respectively, the 
free algebras $\C[L]$ and $\C[E]$ which are, by Theorem \ref{independence}, algebraically disjoint. Moreover,
since $\ncp{\C_{\mathrm{exc}}}{Y}=\C[\{y\}_{y\in Y}]$ and $Y\subset\Lyn Y$ then we deduce the respected results.
\item For any $k\ge1$, let $\Phi_k:=\span_{\C}\{e^{l_1\ell_{r_1}+\ldots+l_k\ell_{r_k}}\}_{
\mbox{\small distinct }r_1,\ldots,r_k\in\N^*,l_1,\ldots,l_k\in\Z^*}$.
Let $\C[\Phi]$ be the algebra of $\Phi:=\span_{\C}\{e^{\pm\ell_r}\}_{r\ge1}$. 
Since $(\ell_r)_{r\ge1}$ is $\C$-free then $\Phi_1\subsetneq\Phi_2\subsetneq\ldots$ and then
$\C[\Phi]=\bigoplus_{k\ge1}\Phi_k$. Moreover, the disjunction of $\C[E]$ and $\C[L]$ leads to 
$\calC_k\cong\calL^+\otimes_{\C}\Phi_k$ and then yields the expected result.
\end{enumerate}
\end{proof}

\begin{remark}
Let us back the second point of Proposition \ref{Weierstrass3} and then the formula \eqref{OrbFact},
for any $q\in\N_{\ge1}$ such that $q\equiv1(\mod 2)$, 
\begin{eqnarray*}
e^{\ell_{qr}(z)}=e^{\ell_r(z)}\prod_{\chi\in\mathbb{X}\setminus\{1\}}e^{\ell_r(\chi z)}.
\end{eqnarray*}
Since $(e^{\ell_r})_{r\ge1}$ is algebraically free over $\calE^+$ then
\begin{eqnarray*}
\prod_{\chi\in\mathbb{X}\setminus\{1\}}e^{\ell_r(\chi z)}\notin\calE^+[(e^{\ell_k})_{k\ge1}].
\end{eqnarray*}
\end{remark}

\subsection{Polylogarithms and harmonic sums indexed by rational series}\label{Polylogarithms}
Using the projector $\pi_X:(\CY,.,1_{Y^*})\longrightarrow(\CX,.,1_{X^*})$ defined as the concatenation morphism,
mapping $y_s$ to $x_0^{s-1}x_1$ and admitting $\pi_Y$ as adjoint, one has the following one-to-one correspondences
\begin{eqnarray*}
({s_1},\ldots,{s_r})\in(\N^*)^r\leftrightarrow y_{{s_1}}\ldots y_{{s_r}}\in Y^*
\mathop{\rightleftharpoons}_{\pi_Y}^{\pi_X}x_0^{{s_1}-1}x_1\ldots x_0^{{s_r}-1}x_1\in X^*x_1.
\end{eqnarray*}
In all the sequel, $\Omega:=\widetilde{\C\setminus\{0,1\}}$ and
\begin{eqnarray}\label{polylog}
\omega_0(z):=z^{-1}dz&\mbox{and}&.\omega_1(z):=(1-z)^{-1}dz.
\end{eqnarray}
By \eqref{notation}, $\Li_{{s_1},\ldots,{s_r}}(z)=\alpha_0^z(x_0^{{s_1}-1}x_1\ldots x_0^{{s_k}-1}x_1)$, for
${s_1},\ldots,{s_r}\in\N^*$. Thus, putting $\Li_{{x_0}}(z):=\log(z)$, the following morphisms are injective
\begin{eqnarray}
\Li_{\bullet}:(\ncp{\Q}{X},\shuffle,1_{X^*})&\longrightarrow&\left(\Q\{\Li_w\}_{w\in X^*},.,1\right),\cr
x_0^{{s_1}-1}x_1\ldots x_0^{{s_r}-1}x_1&\longmapsto&\Li_{x_0^{{s_1}-1}x_1\ldots x_0^{{s_r}-1}x_1}=\Li_{{s_1},\ldots,{s_r}},\label{Li}\\
\H_{\bullet}:(\ncp{\Q}{Y},\stuffle,1_{Y^*})&\longrightarrow&\left(\Q\{\H_w\}_{w\in Y^*},.,1\right),\cr
y_{{s_1}}\ldots y_{{s_r}}&\longmapsto&\H_{y_{{s_1}}\ldots y_{{s_r}}}=\H_{{s_1},\ldots,{s_r}}. \label{H}
\end{eqnarray}

In order to extend $\Li_{\bullet},\H_{\bullet}$ (in \eqref{Li}, \eqref{H})
over some subdomain of $\ncs{\C^{\mathrm{rat}}}{X}$ (resp. $\ncs{\C^{\mathrm{rat}}}{Y}$), let us call
$\mathrm{Dom}_R(\Li_{\bullet})$ the set of series 
\begin{eqnarray}\label{graduer}
S=\sum_{n\geq 0}S_n&\mbox{with}&S_n:=\sum_{|w|=n}\scal{S}{w}w
\end{eqnarray}
such that $\sum_{n\ge0}\Li_{S_n}$ converge uniformly in any compact of $\Omega$.

For any $0< R\le 1$, such that $\sum_{n\geq 0}\Li_{S_n}$ converge uniformly in the open disc $D_{\abs{z}<R}$,
one has $(1-z)^{-1}\Li_S=\sum_{N\ge0}a_Nz^N$ converge in the same disc and then $\H_{\pi_Y(S_n)}(N)=a_N$, for $N\ge0$.
Hence, let us define
\begin{eqnarray*}
\mathrm{Dom}_R(\Li_{\bullet})&:=&\{S\in\C1_{X_*}\oplus\ncs{\C}{X}x_1|\sum_{n\ge1}\Li_{S_n}\mbox{ converge in }D_{\abs{z}<R}\},\\
\mathrm{Dom}(\H_{\bullet})&:=&\pi_Y\mathrm{Dom}^{\mathrm{loc}}(\Li_{\bullet}),\mbox{ where }
\mathrm{Dom}^{\mathrm{loc}}(\Li_{\bullet}):=\bigcup_{0< R\le 1}\mathrm{Dom}_R(\Li_{\bullet}).
\end{eqnarray*}
Under suitable convergence condition this extension can be realized and \cite{Domains,Ngo2,CM}
\begin{enumerate}
\item $\mathrm{Dom}(\Li_{\bullet})$ (resp. $\mathrm{Dom}(\H_{\bullet})$) is closed by shuffle (resp. quasi-shuffle) products.
\item $\Li_{S\shuffle T}=\Li_S\Li_T$ and $\H_{S\stuffle T}=\H_S\H_T$,
for $S,T\in\mathrm{Dom}(\Li_{\bullet})$ (resp. $\mathrm{Dom}(\H_{\bullet})$).
\end{enumerate}
Any series $S\in\ncs{\C}{{\calX}}$ is syntactically exchangeable iff it is of the form
\begin{eqnarray}
S=\sum_{\alpha\in\N^{({\calX})},\mathrm{supp}(\alpha)=\{x_1,\ldots,x_k\}}
s_{\alpha}x_1^{\alpha(x_1)}\shuffle\ldots\shuffle x_k^{\alpha(x_k)}.
\end{eqnarray}
The set of these series, a $\shuffle$-subalgebra of $\ncs{A}{\calX}$,
will be denoted by $\ncs{\C_{\mathrm{exc}}^{\mathrm{synt}}}{\calX}$.

\begin{theorem}[extension of $\Li_{\bullet}$]\label{Indexation}
Let $\calC_{\C}:=\C[\{z^a,(1-z)^b\}_{a,b\in\C}]$. Then
\begin{enumerate}
\item The algebra ${\calC_{\C}}\{\Li_w\}_{w\in X^*}$ is closed under the differential operators
$\theta_0:=z{\partial_z}$ and $\theta_1:=(1-z){\partial_z}$ and under their sections ${\iota_0},{\iota_1}$
($\theta_0\iota_0=\theta_1\iota_1=\mathrm{Id}$).

\item The bi-integro differential algebra
$(\calC_{\C}\{\Li_w\}_{w\in X^*},\theta_0,\theta_1,\iota_0,\iota_1)$
is closed under the action of the group of transformations, ${\calG}$,
generated by $\{z\mapsto1-z,z\mapsto1/z\}$, permuting $\{0,1,+\infty\}$:
\begin{eqnarray*}
\forall h\in{\calC_{\C}}\{\Li_w\}_{w\in X^*},&\forall g\in{\calG},&h(g)\in{\calC_{\C}}\{\Li_w\}_{w\in X^*}.
\end{eqnarray*}

\item If $R\in{\ncs{{\mathbb C}_{\mathrm{exc}}^{\mathrm{rat}}}{X}}\shuffle\CX$
(resp. ${\ncs{{\mathbb C}_{\mathrm{exc}}^{\mathrm{rat}}}{X}}$)
then $\Li_R\in{\calC_{\C}}\{\Li_w\}_{w\in X^*}$ (resp. ${\calC_{\C}}[\log(z),\log(1-z)]$).

\item The family $\{\Li_w\}_{w\in X^*}$ (resp. $\{\Li_l\}_{l\in\Lyn X}$)
is linearly (resp. algebraically) independent over ${\calC_{\C}}$.
\end{enumerate}
\end{theorem}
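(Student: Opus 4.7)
For items~(1) and~(2) the plan is to reduce closure to direct computations on generators. I first verify that $\calC_\C$ itself is stable under $\theta_0,\theta_1$: $\theta_0 z^a=a z^a$, $\theta_0(1-z)^b=b(1-z)^b-b(1-z)^{b-1}$, with symmetric formulas for $\theta_1$. Combined with the classical relations $\theta_i\Li_{x_iw}=\Li_w$ and the Leibniz rule, this yields closure of $\calC_\C\{\Li_w\}_{w\in X^*}$ under both derivations. For the sections $\iota_0,\iota_1$, I induct on the weight of $w$ in monomials $F\cdot\Li_w$ ($F\in\calC_\C$); an integration by parts gives
\begin{eqnarray*}
\iota_0(F\cdot\Li_{x_iv}) &=& \iota_0(F)\cdot\Li_{x_iv} - \iota_i(\iota_0(F)\cdot\Li_v),
\end{eqnarray*}
and analogously for $\iota_1$, reducing to the base case $\iota_i(F)$ with $F\in\calC_\C$: for $F=z^a$ or $(1-z)^b$ one picks up either a pure power in $\calC_\C$ or a logarithm $\Li_{x_0}=\log z$, $-\Li_{x_1}=\log(1-z)$, all inside the algebra. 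For item~(2) I reduce to the two generators $s:z\mapsto 1-z$ and $t:z\mapsto 1/z$ of $\calG\simeq\mathfrak{S}_3$, noting that $\calC_\C$ is stable under each; the conjugation relations (e.g.\ $s_*\theta_0=-\theta_1$, $s_*\iota_0=-\iota_1$, and analogues for $t$) combined with induction on $|w|$ using item~(1) deliver $\Li_w\circ s,\Li_w\circ t\in\calC_\C\{\Li_w\}$.

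For item~(3), Lemma~\ref{exchangeable}.\ref{ratex1} decomposes $\ncs{\C_{\mathrm{exc}}^{\mathrm{rat}}}{X}=\ncs{\C^{\mathrm{rat}}}{x_0}\shuffle\ncs{\C^{\mathrm{rat}}}{x_1}$, and Kronecker's theorem (over the algebraically closed field $\C$) spans each factor by $(ax_i)^*\shuffle\ncp{\C}{x_i}$, $a\in\C$. The key evaluations
\begin{eqnarray*}
\Li_{(ax_0)^*}(z) &=& \sum_{n\ge0}\frac{a^n\log^n(z)}{n!} = z^a,\\
\Li_{(ax_1)^*}(z) &=& \sum_{n\ge0}\frac{a^n(-\log(1-z))^n}{n!} = (1-z)^{-a},
\end{eqnarray*}
place both in $\calC_\C$. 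Since $\Li_\bullet$ is a shuffle morphism on its domain, for a pure $R\in\ncs{\C_{\mathrm{exc}}^{\mathrm{rat}}}{X}$ one gets $\Li_R\in\calC_\C[\log z,\log(1-z)]=\calC_\C[\Li_{x_0},\Li_{x_1}]$; shuffling $R$ further with any $P\in\CX$ multiplies $\Li_R$ by $\Li_P$, a $\C$-linear combination of $\Li_w$'s, producing an element of $\calC_\C\{\Li_w\}_{w\in X^*}$.

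For item~(4), I invoke Lemma~\ref{ind_lin} with $\calA=\calH(\Omega)$, $\calC_0=\calC_\C$, $u_{x_0}=z^{-1}$, $u_{x_1}=(1-z)^{-1}$, the associated group-like solution of~\eqref{NCDE_abs} being the log-regularized Chen series $\sum_w\Li_w\,w$ (cf.~\cite{IMACS,Domains}). The equivalence of conditions~\ref{item1}--\ref{item6} of Lemma~\ref{ind_lin} reduces the whole independence claim to condition~\ref{item6}: $\{z^{-1},(1-z)^{-1}\}$ is manifestly $\C$-free, and one must show that no non-zero combination $c_0/z+c_1/(1-z)$ equals $\partial f$ for some $f\in\mathrm{Frac}(\calC_\C)$. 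This is the main obstacle, for it amounts to proving $\log z,\log(1-z)\notin\mathrm{Frac}(\calC_\C)$: any such $f$ would agree up to a constant with $c_0\log z-c_1\log(1-z)$. I propose to argue by asymptotic analysis near $z=0$: any $f=P/Q\in\mathrm{Frac}(\calC_\C)$, with $P,Q$ finite $\C$-linear combinations of monomials $z^A(1-z)^B$, has a leading behaviour of the form $c\,z^{\alpha}(1+o(1))$ as $z\to0^+$ (modulated at most by the bounded oscillation $e^{\mathrm{i}\Im(\alpha)\log z}$) for some $\alpha\in\C,c\in\C^*$, hence grows or decays polynomially; $\log z$ instead diverges only logarithmically, which no such expansion can match. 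A symmetric argument at $z=1$ excludes $\log(1-z)$, completing the verification.
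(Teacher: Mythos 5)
Your handling of items (1)--(3) matches what the paper intends: it declares those items immediate, and your explicit computations (the eigen-relations $\theta_0 z^a=az^a$, $\theta_0(1-z)^b=b(1-z)^b-b(1-z)^{b-1}$, the evaluations $\Li_{(ax_0)^*}=z^a$ and $\Li_{(ax_1)^*}=(1-z)^{-a}$ combined with Lemma \ref{exchangeable} and Kronecker's theorem) are exactly the intended content, up to a small repair in your integration-by-parts formula for $\iota_0(F\cdot\Li_{x_1v})$, where $\theta_0\Li_{x_1v}=\frac{z}{1-z}\Li_v$ rather than $\Li_v$. For item (4), you and the paper both funnel the statement through Lemma \ref{ind_lin} applied to the group-like series $\sum_{w}\Li_w\,w$ with $\calC_0=\calC_{\C}$, but you then verify different (equivalent) conditions of that lemma. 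The paper establishes condition \ref{item4} --- the $\calC_{\C}$-linear independence of $1_\Omega,\Li_{x_0},\Li_{x_1}$ --- by lifting to the universal cover of $\C\setminus\{0,1\}$ and letting the deck transformations $D_0^r,D_1^s$ act: the logarithms acquire unbounded additive terms $2\mathrm{i}r\pi,2\mathrm{i}s\pi$ while the coefficients $P_i(e_0^a,e_1^b)$ only acquire multiplicative factors, which forces $P_i=0$. You instead establish condition \ref{item6}, that no nonzero $c_0z^{-1}+c_1(1-z)^{-1}$ lies in $\partial\,\mathrm{Frac}(\calC_{\C})$, by integrating and excluding $\log z$ and $\log(1-z)$ from $\mathrm{Frac}(\calC_{\C})$ via growth estimates at the singular points. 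Your route dispenses with covering spaces and monodromy entirely, which is a real economy of apparatus; the paper's route works at a single basepoint and does not require controlling boundary asymptotics.

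One step of yours needs tightening. Because the exponents range over all of $\C$, a general element of $\calC_{\C}$ restricted to $(0,\epsilon)$ is \emph{not} asymptotic to a single monomial $c\,z^{\alpha}$: grouping the monomials $z^{A_j}(1-z)^{B_j}$ by $\Re A_j$, the dominant block has the form $z^{\sigma}g(\log z)$ with $g$ a nonzero trigonometric polynomial, which is bounded but may vanish along sequences $\log z\to-\infty$, so the quotient $P/Q$ need not have a leading behaviour in your sense. The fix is to choose a sequence $z_n\to 0^+$ along which the almost-periodic factor of $Q$ stays bounded away from $0$ (possible since a nonzero trigonometric polynomial has positive mean square); along such a sequence $|P/Q|$ is controlled above and below by powers of $z_n$, which is incompatible with the purely logarithmic divergence of $c_0\log z_n$. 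With that repair the exclusion of $\log z$ from $\mathrm{Frac}(\calC_{\C})$ goes through and your argument is complete. (The paper's monodromy step faces the mirror-image subtlety: for $\Im a\neq 0$ the factor $e^{2a\mathrm{i}\pi r}$ is not bounded in $r$, so its ``bounded'' should likewise be read as ``of controlled growth along a suitable subsequence of $r$''.)
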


\goodbreak

\begin{proof}
The three first items are immediate. Only the last one needs a proof:

Let then $B=\C\setminus\{0,1\}$, $\Omega=\C\setminus(]-\infty,0]\cup[1,+\infty[)$
and choose a basepoint  $b\in\Omega$, one has the following diagram
\begin{eqnarray*}
\begin{tikzcd}
&(\tilde{B},\tilde{b})\ar{d}{p}\\
(\Omega,b)\ar[hook]{r}{j}\ar[hook]{ur}{s}&(B,b)
\end{tikzcd}
\end{eqnarray*}
Any holomorphic function $f\in \calH(\Omega)$ such that $f'=df/dz$ admits an analytic continuation to $B$ 
can be lifted to $\tilde{B}$ by $\tilde{f}(z):=f(b)+\int_{b}^zf'(s)ds$.

Let $\L$ be the noncommutative series of the polylogarithms $\{\Li_w\}_{w\in X^*}$, which is group-like,
and $C_{z_0\path z}$ be the Chen series, of $\{\omega_0,\omega_1\}$ along $z_0\path z\in\tilde{B}$,
$C_{z_0\path z}=\L(z)\L^{-1}(z_0)$ (see \cite{CM}).

Now, in view of Lemma \ref{ind_lin}, as the algebra $\calC$ is without zero divisors and contains the field of constants $\C$,
it suffices to prove that $\Li_{x_0},\Li_{x_1}$ and $1_\Omega$ are $\calC$-linearly independent. It is an easy exercise to check
that $s_*(\tilde{f}):=\tilde{f}\circ s$ coincides with the given $f$. This is the case, in particular of the functions
$\log(z)$ and $\log((1-z)^{-1})$ whose liftings will be denoted $\log_0$ and $\log_1$, respectively.
So, we lift the functions $z^a$ and $(1-z)^b$ as, respectively,
\begin{eqnarray}
e_0^a(\tilde{z}):=e^{a\log_0(\tilde{z})}&\mbox{and}&e_1^b:=e^{b \log_1(\tilde{z})}
\end{eqnarray}
and, of course, by construction,
\begin{eqnarray}\label{lifting1}
e_0^a\circ s=(z\mapsto z^a)&\mbox{and}&e_1^b\circ s=(z\mapsto(1-z)^b) 
\end{eqnarray}
We suppose a dependence relation, in $\calH(\Omega)$ 
\begin{eqnarray}\label{rel1}
P_0(z^a,(1-z)^b)\Li_{x_0}+P_1(z^a,(1-z)^b)\Li_{x_1}+P_2(z^a,(1-z)^b).1_\Omega=0
\end{eqnarray}
where $P_i\in \C[X,Y]$ are two-variable polynomials.
From \eqref{lifting1} and the fact that $\Omega\not=\emptyset$, we get   
\begin{eqnarray}
P_0(e_0^a,e_1^b)\log_0+P_1(e_0^a,e_1^b)\log_1+P_2(e_0^a,e_1^b).1_{\tilde{B}}=0.
\end{eqnarray}

Now, we consider $D_0$ (resp. $D_1$), the deck transformation corresponding to the path 
$\sigma_0(t)=e^{2\mathrm{i}\pi t}/2$ (resp. $\sigma_1(t)=(1-e^{-2\mathrm{i}\pi t})/2$, one gets 
\begin{eqnarray}\label{mono1}
\log_0\circ(D_0^r)(\tilde{z})=\log_0(\tilde{z})+2\mathrm{i}r\pi
&\mbox{and}&
\log_1\circ(D_1^s)(\tilde{z})=\log_1(\tilde{z})+2\mathrm{i}s\pi
\end{eqnarray}
Now we remark that 
\begin{eqnarray}
e_0^{[a]}\circ D_0(\tilde{z})=e_0^{[a]}(\tilde{z})e^{2a\mathrm{i}\pi}&\mbox{and}&e_1^{[b]}\circ D_0=e_1^{[b]}
\end{eqnarray}
and, similarly
\begin{eqnarray}
e_1^{[b]}\circ D_1(\tilde{z})=e_1^{[b]}(\tilde{z})e^{2b\mathrm{i}\pi}&\mbox{and}&e_0^{[a]}\circ D_1=e_0^{[a]}
\end{eqnarray}
so that $P_i(e_0^a,e_1^b)$ remain bounded through the actions of $D_0^r$ and $D_1^s$,
from \eqref{mono1}, we get that $P_i=0,i=0..2$ which proves the claim.
\end{proof}

\begin{example}[\cite{Ngo2}]
Let us use the noncommutative multivariate exponential transforms
{\it i.e.}, for any syntactically exchangeable series, we get
\begin{eqnarray*}
\sum_{i_0,i_1\ge0}s_{i_0,i_1}x_0^{i_0}\shuffle x_1^{i_1}&\longmapsto&
\sum_{i_0,i_1\ge0}\dfrac{s_{i_0,i_1}}{i_0!i_1!}\Li_{x_0}^{i_0}\Li_{x_1}^{i_1}.
\end{eqnarray*}
Hence, $x_0^n\longmapsto{\Li_{x_0}^n}/{n!}$ and $x_1^n\longmapsto{\Li_{x_1}^n}/{n!}$,
for $n\in\N$, yielding some polylogarithms indexed by series,
\begin{eqnarray*}
\Li_{x_0^*}(z)=z,&\Li_{x_1^*}(z)=(1-z)^{-1},&\Li_{(ax_0+bx_1)^*}(z)=z^a(1-z)^{-b}.
\end{eqnarray*}

Moreover, for any $(s_1,\ldots,s_r)\in\N^r_+$, there exists an unique series ${R_{y_{s_1}\ldots y_{s_r}}}$
belonging to $(\Z[x_1^*],\shuffle,1_{X^*})$ such that $\Li_{-{s_1},\ldots,-{s_r}}=\Li_{R_{y_{s_1}\ldots y_{s_r}}}$.
More precisely (by convention $\rho_0=x_1^*-1_{X^*}$),
\begin{eqnarray*}
R_{y_{s_1}\ldots y_{s_r}}=
\sum_{k_1=0}^{s_1}\ldots\sum_{k_r=0}^{(s_1+\ldots+s_r)-(k_1+\ldots+k_{r-1})}
\binom{s_1}{k_1}\ldots\binom{\sum_{i=1}^rs_i-\sum_{i=1}^{r-1}k_i}{k_r}\\
\rho_{k_1}\shuffle\ldots\shuffle\rho_{k_r},
\end{eqnarray*}
and using the Stirling numbers of second kind, $S_2({k_i},j)$, one has
\begin{eqnarray*}
\rho_{k_i}=x_1^*\shuffle\sum_{j=1}^{k_i}S_2({k_i},j)j!(x_1^*-1_{X^*})^{\shuffle j}, (k_i\neq0).
\end{eqnarray*}
\end{example}

\begin{theorem}[extension of $\H_{\bullet}$]\label{Indexation2}
For any $r\ge1$, one has, for any $t\in\C,\abs{t}<1$,
\begin{eqnarray*}
\H_{(t^ry_r)^*}=\sum_{k\ge0}\H_{y_r^k}t^{kr}
=\exp\biggl(\sum_{k\ge1}\H_{y_{kr}}\frac{(-t^r)^{k-1}}k\biggr).
\end{eqnarray*}
Moreover, for $\abs{a_s}<1,\abs{b_s}<1$ and $\abs{a_s+b_s}<1$,
\begin{eqnarray*}
\H_{(\sum_{s\ge1}(a_s+b_s)y_s+\sum_{r,s\ge1}a_sb_ry_{s+r})^*}
=\H_{(\sum_{s\ge1}a_sy_s)^*}\H_{(\sum_{s\ge1}b_sy_s)^*}.
\end{eqnarray*}
Hence,
\begin{eqnarray*}
\H_{(a_sy_s+a_ry_r+a_sa_ry_{s+r})^*}=\H_{(a_sy_s)^*}\H_{(a_ry_r)^*},&
\H_{(-a_s^2y_{2s})^*}=\H_{(a_sy_s)^*}\H_{(-a_sy_s)^*}.
\end{eqnarray*}
\end{theorem}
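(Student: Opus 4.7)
The plan is to reduce both statements to algebraic identities in the completion of $\ncp{\C}{Y}$ equipped with the stuffle product, then apply $\H_\bullet$ via its stuffle-morphism property together with the convergence framework summarised just before the theorem.

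For the single-letter identity, I would first write $(t^r y_r)^* = \sum_{k\ge 0} t^{rk}\,y_r^k$, a single-letter rationally exchangeable rational series, so that applying $\H_\bullet$ termwise yields the first equality. For the exponential form I would invoke the elementary identification
$$\H_{y_r^k}(n) = \sum_{n\ge n_1 > \cdots > n_k > 0}\frac{1}{(n_1\cdots n_k)^r} = e_k(1^{-r},2^{-r},\ldots,n^{-r}),$$
the $k$-th elementary symmetric polynomial in the reciprocal $r$-th powers, combined with the Newton--Girard generating identity
$$\sum_{k\ge 0} e_k(x_1,\ldots,x_n)\,u^k = \prod_{j=1}^n (1+ux_j) = \exp\biggl(\sum_{k\ge 1}\frac{(-1)^{k-1}}{k}\, u^k\, p_k(x_1,\ldots,x_n)\biggr),$$
specialised at $x_j = j^{-r}$, $u = t^r$. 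Since $p_k(1^{-r},\ldots,n^{-r}) = \H_{y_{kr}}(n)$, this produces the exponential form pointwise in $n$ (and for $|t|<1$, well inside the radius of convergence).

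For the multi-letter identity, my strategy is to establish the algebraic identity
$$A^* \stuffle B^* = C^*, \qquad C := A + B + \sum_{s,t\ge 1} a_s b_t\,y_{s+t},$$
in the appropriate completion of $\ncp{\C}{Y}$ and then transport it through $\H_\bullet$ via $\H_{S\stuffle T} = \H_S\H_T$. To prove it, I would compute, for an arbitrary word $w = y_{u_1}\cdots y_{u_m}$, the coefficient on both sides. The recursive rule $u\stuffle v = u_1(u'\stuffle v) + v_1(u\stuffle v') + (u_1\ast v_1)(u'\stuffle v')$ (with $y_s \ast y_t = y_{s+t}$) shows that $\langle A^*\stuffle B^*\mid w\rangle$ is a sum over the $3^m$ colourings of the $m$ positions as \emph{$A$-only}, \emph{$B$-only}, or \emph{merged}, with contributions $a_{u_i}$, $b_{u_i}$, or $\sum_{s+t=u_i}a_s b_t$ respectively. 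Since the positions contribute independently, the sum factors as $\prod_{i=1}^m\bigl(a_{u_i} + b_{u_i} + \sum_{s+t=u_i} a_s b_t\bigr) = \langle C^m\mid w\rangle$, giving the identity. The two special cases are then immediate specialisations: take $(A,B) = (a_s y_s,\,a_r y_r)$ for the first and $(A,B) = (a_s y_s,\,-a_s y_s)$ for the second.

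The main obstacle I anticipate is the verification of convergence: $A, B, C$ are possibly infinite series on $Y$, so both the formal identity $A^*\stuffle B^* = C^*$ and the transport through $\H_\bullet$ require working in a suitable completion. The bounds $|a_s|,|b_s|,|a_s+b_s|<1$ appear precisely calibrated to ensure that $A^*, B^*, C^* \in \mathrm{Dom}(\H_\bullet)$ in the sense of \eqref{graduer}, so that the termwise identification of coefficients indeed lifts the algebraic identity to the claimed equality of numerical generating functions.
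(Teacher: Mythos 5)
Your proposal is correct and follows essentially the same route as the paper: the first identity via Newton--Girard applied to $\H_{y_r^k}(n)=e_k(1^{-r},\ldots,n^{-r})$ and the power sums $p_k=\H_{y_{kr}}(n)$ (the paper additionally records the resulting finite product $\prod_{n=1}^N(1-(-t^r)/n^r)$ and the domination of $\H_{(t^ry_r)^*}$ by $e^{\ell_r}$), and the second via the quasi-shuffle identity $A^*\stuffle B^*=C^*$ transported through $\H_{S\stuffle T}=\H_S\H_T$. The only substantive difference is that you sketch a proof of that star identity (recursive stuffle rule and position-wise factorization of coefficients) where the paper merely asserts it; note also that the exponent $(-t^r)^{k-1}$ in the printed statement is a typo for $-(-t^r)^{k}$, and your Newton--Girard computation agrees with the (correct) formula in the paper's proof.
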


\begin{proof}
For $t\in\C,\abs{t}<1$, since $\Li_{(tx_1)^*}$ is well defined then
so is the arithmetic function, expressed via Newton-Girard formula (see \cite{JSC}), for $n\ge0$, by
\begin{eqnarray*}
\H_{(ty_1)^*}(n)
=\sum_{k\ge0}\H_{y_1^k}(n)t^{k}
=\exp\Bigl(-\sum_{k\ge1}\H_{y_k}(n)\frac{(-t)^{k}}k\Bigr)
=\prod_{l=1}^n\Bigl(1+\frac{t}{l}\Bigr).
\end{eqnarray*}
Similarly, for any $r\ge2$, the transcendent function $\H_{(t^ry_r)^*}$ can be expressed
via Newton-Girard formula (see \cite{JSC}) once again and via Adam's transform, by
\begin{eqnarray*}
\H_{(t^ry_r)^*}(n)
=\sum_{k\ge0}\H_{y_r^k}(n)t^{kr}
=\exp\Bigl(-\sum_{k\ge1}\H_{y_{kr}}(n)\frac{(-t^r)^{k}}k\Bigr)
=\prod_{l=1}^N\Bigl(1-\frac{(-t^r)}{l^r}\Bigr).
\end{eqnarray*}
Since $\absv{\H_{y_r}}_{\infty}\le\zeta(r)$ then
$-\sum_{k\ge1}{\H_{kr}{(-t^r)^{k}}}/{k}$ is termwise dominated by $\absv{\ell_r}_{\infty}$
and then $\H_{(t^ry_r)^*}$ by $e^{\ell_r}$ (see also Theorem \ref{Newton-Girard} bellow).
It follows then the last results by using the following identity \cite{PVNC}
\begin{eqnarray*}
\Bigl(\sum_{s\ge1}a_sy_s\Bigr)^*\stuffle\Bigl(\sum_{s\ge1}b_sy_s\Bigr)^*=
\Bigl(\sum_{s\ge1}(a_s+b_s)y_s+\sum_{r,s\ge1}a_sb_ry_{s+r}\Bigr)^*.
\end{eqnarray*}
\end{proof}

From the estimations from above of the previous proof, it follows then
\begin{corollary}
For any $r\ge2$, one has
\begin{eqnarray*}
\frac{1}{\Gamma_{y_r}(1+t)}
=\sum_{k\ge0}\zeta(\underbrace{{r,\ldots,r}}_{k{\tt times}})t^{kr}
=\exp\Bigl(-\sum_{k\ge1}\zeta(kr)\frac{(-t^r)^{k}}k\Bigr)
=\prod_{n\ge1}\Bigl(1-\frac{(-t^r)}{n^r}\Bigr).
\end{eqnarray*}
\end{corollary}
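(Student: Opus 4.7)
The plan is to obtain this corollary by letting $N \to \infty$ in the three equivalent forms of $\H_{(t^r y_r)^*}(N)$ established in Theorem \ref{Indexation2}, and then identifying the limit with the Weierstrass/Newton--Girard expansion of $\Gamma_{y_r}^{-1}(1+t)$ coming from Definition \ref{f+expf} and Proposition \ref{Weierstrass2}.

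First, I would invoke Abel's theorem \eqref{Abel} for each fixed depth: for any $k \ge 0$ and $r \ge 2$, one has $\lim_{N\to\infty}\H_{y_r^k}(N) = \zeta(\underbrace{r,\ldots,r}_{k})$, and similarly $\lim_{N\to\infty}\H_{y_{kr}}(N) = \zeta(kr)$, both convergent since $r \ge 2$. To pass these pointwise limits through the infinite sum (resp. through the exponent of the sum) in
\[
\H_{(t^r y_r)^*}(N) = \sum_{k\ge 0}\H_{y_r^k}(N)t^{kr} = \exp\biggl(-\sum_{k\ge 1}\H_{y_{kr}}(N)\frac{(-t^r)^k}{k}\biggr),
\]
I would use the termwise domination already noted in the proof of Theorem \ref{Indexation2}: since $\|\H_{y_{kr}}\|_\infty \le \zeta(kr) \le \zeta(r)$ and $\H_{y_r^k}(N) \le \zeta(r)^k/k!$ by the stuffle bound, both series converge uniformly in $N$ for $|t|<1$, so the limits may be exchanged with the sum and the exponential. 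This gives
\[
\lim_{N\to\infty}\H_{(t^r y_r)^*}(N) = \sum_{k\ge 0}\zeta(\underbrace{r,\ldots,r}_{k})t^{kr} = \exp\biggl(-\sum_{k\ge 1}\zeta(kr)\frac{(-t^r)^k}{k}\biggr).
\]

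Next I would identify the exponential with $1/\Gamma_{y_r}(1+t)$: by Definition \ref{f+expf}, $\ell_r(t) = -\sum_{k\ge 1}\zeta(kr)(-t^r)^k/k$ and $\Gamma_{y_r}(1+t) := e^{-\ell_r(t)}$, so the exponential is exactly $e^{\ell_r(t)} = \Gamma_{y_r}^{-1}(1+t)$. For the infinite product formula, I would take $N\to\infty$ in the finite product $\prod_{n=1}^N(1 - (-t^r)/n^r)$ from Theorem \ref{Indexation2}; the convergence is guaranteed since $\sum_{n\ge 1}|t^r|/n^r < \infty$ for $r \ge 2$, and the limit coincides with the factorization $e^{\ell_r(t)} = \prod_{n\ge 1}(1 + t^r/n^r)$ of Proposition \ref{Weierstrass2}(4), rewritten as $\prod_{n\ge 1}(1 - (-t^r)/n^r)$.

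The one delicate point, and the main obstacle in my view, is the justification of the limit interchange in the exponent: the expression $\exp(-\sum_k \H_{y_{kr}}(N)(-t^r)^k/k)$ is not absolutely summing the logarithmic-type coefficients in a form that immediately permits a naive termwise passage, so one really must appeal to the uniform bound $\H_{y_{kr}}(N) \le \zeta(kr) \le \zeta(r)$ (valid precisely because $r \ge 2$) together with $|t|<1$ to ensure normal convergence of $\sum_k \zeta(r)|t|^{kr}/k$, and hence continuity of the exponential under the limit. Once this is secured, the three equalities collapse into one another and the corollary follows directly.
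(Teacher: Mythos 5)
Your proposal is correct and follows essentially the same route as the paper, which simply invokes ``the estimations from above of the previous proof'' --- i.e.\ the domination $\absv{\H_{y_{kr}}}_\infty\le\zeta(kr)$ and the termwise bounds established in the proof of Theorem \ref{Indexation2} --- to pass to the limit $N\to\infty$ in the three equivalent expressions for $\H_{(t^ry_r)^*}(N)$ and identify the result with $\Gamma_{y_r}^{-1}(1+t)$ via Definition \ref{f+expf} and Proposition \ref{Weierstrass2}. You have merely made explicit the uniform-convergence justification that the paper leaves implicit.
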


\begin{corollary}
For any $r\ge1$
\begin{eqnarray*}
y_r^*=\exp_{\stuffle}\Bigl(\sum_{k\ge1}y_{kr}\frac{(-1)^{k-1}}k\Bigr).
\end{eqnarray*}
Hence, for any $k\ge0$, one has
\begin{eqnarray*}
y_r^n&=&\frac{(-1)^n}{n!}\sum_{s_1,\ldots,s_n>0\atop s_1+\ldots+ns_n=n}
\frac{(-y_r)^{\stuffle s_1}}{1^{s_1}}\stuffle\ldots\stuffle\frac{(-y_{nr})^{\stuffle s_n}}{n^{s_n}}
\end{eqnarray*}
and, for any $r,s\ge1$, one also has
\begin{eqnarray*}
y_r^*\stuffle y_r^*&=&\sum_{k=0}^r{r+s-k\choose s}{s\choose k}y_{r+s-k}.
\end{eqnarray*}
\end{corollary}

\subsection{Extended double regularization by Newton-Girard formula}\label{regularization}

By \eqref{Li}--\eqref{H}, the following polymorphism is, by definition, surjective (see \cite{CM}) 
\begin{eqnarray}
\begin{matrix}
\zeta:{\displaystyle(\Q 1_{X^*}\oplus x_0\QX x_1,\shuffle,1_{X^*})\atop
\displaystyle(\Q 1_{Y^*}\oplus(Y-\{y_1\})\ncp{\Q}{Y},\stuffle,1_{Y^*})}
&\longtwoheadrightarrow&({\calZ},.,1),
\end{matrix}
\end{eqnarray}
mapping both $x_0^{{s_1}-1}x_1\ldots x_0^{{s_r}-1}x_1$ and $y_{s_1}\ldots y_{s_r}$ to
$\zeta({s_1},\ldots,{s_r})$, where $\calZ$ denotes the $\Q$-algebra (algebraically) generated
by $\{\zeta(l)\}_{l\in\Lyn X-X}$, or equivalently, $\{\zeta(l)\}_{l\in\Lyn Y-\{y_1\}}$.
It can be extended as characters
\begin{eqnarray}
\zeta_{\shuffle}:(\ncp{\R}X,\shuffle,1_{X^*})&\longrightarrow&(\R,.,1),\\
\zeta_{\stuffle},\gamma_{\bullet}:(\ncp{\R}Y,\stuffle,1_{Y^*})&\longrightarrow&(\R,.,1)
\end{eqnarray}
such that, for any $l\in\Lyn X$, one has (see \cite{CM})
\begin{eqnarray}
\zeta_{\shuffle}(l)=&\mathrm{f.p.}_{z\rightarrow1}\Li_l(z),&\{(1-z)^a\log^b(1-z)\}_{a\in\Z,b\in\N},\\
\zeta_{\stuffle}(\pi_Yl)=&\mathrm{f.p.}_{n\rightarrow+\infty}\H_{\pi_Yl}(n),&\{n^a\H_1^b(n)\}_{a\in\Z,b\in\N},\\
\gamma_{\pi_Yl}=&\mathrm{f.p.}_{n\rightarrow+\infty}\H_{\pi_Yl}(n),&\{n^a\log^b(n)\}_{a\in\Z,b\in\N}.
\end{eqnarray}
It follows that, for any $l\in\Lyn X-X$, $\zeta_{\shuffle}(l)=\zeta_{\stuffle}(\pi_Yl)=\gamma_{\pi_Yl}={\zeta(l)}$,
and, for the algebraic generator $x_0$, $\zeta_{\shuffle}(x_0)=0=\log(1)$
and, for the algebraic generators $x_1$ and $y_1$ (divergent cases),
\begin{eqnarray}
\zeta_{\shuffle}(x_1)=0=\mathrm{f.p.}_{z\rightarrow1}\log(1-z),&&\{(1-z)^a\log^b(1-z)\}_{a\in\Z,b\in\N},\\
\zeta_{\stuffle}(y_1)=0=\mathrm{f.p.}_{n\rightarrow+\infty}\H_1(n),&&\{n^a\H_1^b(n)\}_{a\in\Z,b\in\N},\\
\gamma_{y_1}=\gamma=\mathrm{f.p.}_{n\rightarrow+\infty}\H_1(n),&&\{n^a\log^b(n)\}_{a\in\Z,b\in\N}.
\end{eqnarray}

As in \cite{Ngo2,CM}, considering a character $\chi_{\bullet}$ on $(\ncp{\C}{X},\shuffle,1_{X^*})$
and considering $\mathrm{Dom}(\chi_{\bullet})\subset\ncs{\C}{X}$ as in \eqref{graduer},
we can also check easily that \cite{Domains}:
\begin{itemize}
\item $\ncp{\C}{X}\shuffle\ncs{\C^{\mathrm{rat}}}{X}\subset\mathrm{Dom}(\chi_{\bullet})$ which is closed by shuffle product,
\item for any $S,T\in\mathrm{Dom}(\chi_{\bullet})$, one has $\chi_{S\shuffle T}=\chi_S\chi_T$,
\item if $S\in\mathrm{Dom}(\chi_{\bullet})$ then $\exp_{\shuffle}(S)\in\mathrm{Dom}(\chi_{\bullet})$
and $\chi_{\exp_{\shuffle}(S)}=e^{\chi_S}$.
\end{itemize}

Similarly, considering a character $\chi_{\bullet}$ on $(\ncp{\C}{Y},\stuffle,1_{Y^*})$
and considering $\mathrm{Dom}(\chi_{\bullet})\subset\ncs{\C}{Y}$ as in \eqref{graduer},
we can also check easily that \cite{Domains}:
\begin{itemize}
\item $\ncp{\C}{Y}\stuffle\ncs{\C^{\mathrm{rat}}}{Y}\subset\mathrm{Dom}(\chi_{\bullet})$ which is closed by quasi-shuffle product,
\item for any $S,T\in\mathrm{Dom}(\chi_{\bullet})$, one has $\chi_{S\stuffle T}=\chi_S\chi_T$,
\item if $S\in\mathrm{Dom}(\chi_{\bullet})$ then $\exp_{\stuffle}(S)\in\mathrm{Dom}(\chi_{\bullet})$
and $\chi_{\exp_{\stuffle}(S)}=e^{\chi_S}$.
\end{itemize}

\begin{example}
For any $z\in\C,\abs{z}<1,x\in X=\{x_0,x_1\},y_r\in Y=\{y_k\}_{k\ge1}$, since
\begin{eqnarray*}
(zx)^*=\exp_{\shuffle}(z)&\mbox{and}&(zy_r)^*=\exp_{\stuffle}\Bigl(\sum_{k\ge1}y_{kr}\frac{(-z)^{k-1}}k\Bigr)
\end{eqnarray*}
then
\begin{eqnarray*}
\zeta_{\shuffle}((zx)^*)=e^{z\zeta_{\shuffle}(x)}&\mbox{and}&
\gamma_{(zy_r)^*}=\exp\Bigl(\sum_{k\ge1}\zeta_{\stuffle}(y_{kr})\frac{(-z)^{k-1}}k\Bigr).
\end{eqnarray*}
\end{example}

We now in situation to state that

\begin{theorem}[Regularization by Newton-Girard formula]\label{Newton-Girard}
The characters $\zeta_{\shuffle}$ and $\gamma_{\bullet}$ are extended algebraically as follows:
\begin{eqnarray*}
\begin{matrix}
\zeta_{\shuffle}:(\CX\shuffle\ncs{\C_{\mathrm{exc}}^{\mathrm{rat}}}{X},\shuffle,1_{X^*})&\longrightarrow&(\C,.,1),\\
\forall t\in\C,\abs{t}<1,(tx_0)^*,(tx_1)^*&\longmapsto &{1_{\C}}.
\end{matrix}
\end{eqnarray*}
and 
\begin{eqnarray*}
 \begin{matrix}
\gamma_{\bullet}:(\CY\stuffle\ncs{\C_{\mathrm{exc}}^{\mathrm{rat}}}{Y},\stuffle,1_{Y^*})&\longrightarrow&(\C,.,1),\\
\forall t\in\C,\abs{t}<1,\forall r\ge1,(t^ry_r)^*&\longmapsto&\Gamma_{y_r}^{-1}(1+t).
\end{matrix}
\end{eqnarray*}
Moreover, the morphism $(\C[\{y_r^*\}_{r\ge1}],\shuffle,1_{Y^*})\longrightarrow\C[E]$
mapping $y_r^*$ to $\Gamma_{y_r}^{-1}$, is injective and
$\Gamma_{y_{2r}}(1+\sqrt[2r]{-1}t)=\Gamma_{y_r}(1+t)\Gamma_{y_r}(1+\sqrt[r]{-1}t)$, for $r\ge1$.
\end{theorem}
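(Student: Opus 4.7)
\emph{Extension of $\zeta_{\shuffle}$.} Since in the shuffle algebra one has $(tx_i)^*=\exp_{\shuffle}(tx_i)$, any shuffle-character extension of $\zeta_{\shuffle}$ (already defined on $\CX$ with $\zeta_{\shuffle}(x_0)=\zeta_{\shuffle}(x_1)=0$) is forced to send $(tx_i)^*$ to $\exp(t\,\zeta_{\shuffle}(x_i))=1_{\C}$. By Lemma \ref{exchangeable} the subalgebra $\CX\shuffle\ncs{\C_{\mathrm{exc}}^{\mathrm{rat}}}{X}$ is generated in shuffle by $\CX$ together with these stars, so the extension exists and is uniquely determined.

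\emph{Extension of $\gamma_{\bullet}$ and injectivity.} The central point is to compute $\gamma_{(t^r y_r)^*}=\Gamma_{y_r}^{-1}(1+t)$. For $r\ge 2$, Theorem \ref{Indexation2} gives $\H_{(t^r y_r)^*}(N)=\exp\bigl(-\sum_{k\ge 1}\H_{y_{kr}}(N)(-t^r)^k/k\bigr)$; the uniform bound $\H_{y_{kr}}(N)\le\zeta(kr)\le\zeta(2)$ together with $|t|<1$ permit dominated convergence, yielding $\lim_N\H_{(t^r y_r)^*}(N)=e^{\ell_r(t)}=\Gamma_{y_r}^{-1}(1+t)$, which is the finite part defining $\gamma_{(t^r y_r)^*}$. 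For $r=1$ the limit diverges, but the stuffle-exponential rewriting $(ty_1)^*=\exp_{\stuffle}\bigl(\sum_{m\ge 1}(-1)^{m-1}t^m y_m/m\bigr)$ (compare Theorem \ref{Indexation2} applied to $\H_\bullet$) lets the stuffle character $\gamma_{\bullet}$ act term-by-term on the exponent, with $\gamma_{y_1}=\gamma$ regularizing $H_N$ and $\gamma_{y_m}=\zeta(m)$ for $m\ge 2$; this produces $\gamma_{(ty_1)^*}=\exp(\gamma t-\sum_{m\ge 2}\zeta(m)(-t)^m/m)=e^{\ell_1(t)}=\Gamma^{-1}(1+t)$. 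Injectivity of the shuffle morphism $\C[\{y_r^*\}_{r\ge 1}]\to\C[E]$ is then immediate: by Lemma \ref{exchangeable}.\ref{indepchargen} the family $\{y_r^*\}$ is algebraically free in the shuffle algebra, and by Proposition \ref{plein} so is $\{\Gamma_{y_r}^{-1}\}=\{e^{\ell_r}\}$ in $\calH(\Omega)$, so the shuffle morphism between these two free polynomial algebras in bijective generators is an isomorphism.

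\emph{Duplication formula and main obstacle.} A direct Taylor comparison settles the last identity: from the definition of $\ell_r$, one has $\ell_r(t)+\ell_r(\sqrt[r]{-1}\,t)=-\sum_{k\ge 1}\zeta(kr)[1+(-1)^k]t^{rk}/k$; the odd-$k$ terms cancel, and reindexing the even $k=2m$ leaves $-\sum_{m\ge 1}\zeta(2mr)t^{2rm}/m$, which is exactly $\ell_{2r}(\sqrt[2r]{-1}\,t)$. Exponentiating and inverting yields the stated product, which can be read as the analogue, for the otherwise-excluded case $q=2$, of the first identity of Proposition \ref{Weierstrass3}, the factor $\sqrt[2r]{-1}$ supplying the sign twist missing when $q$ is even. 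The subtlest point of the whole argument is the $r=1$ case of the $\gamma_{\bullet}$-extension: since $\H_{(ty_1)^*}(N)\sim N^t/\Gamma(1+t)$ actually diverges as $N\to\infty$, one cannot pass to the limit naively, and the Newton--Girard stuffle-exponential rewriting is essential precisely to isolate $H_N$ and regularize it to $\gamma$ rather than to a meaningless ``$\zeta(1)$''---a move that is legitimate only because $\gamma_{\bullet}$ is defined in the asymptotic scale $\{n^a\log^b n\}$ rather than $\{n^a\H_1^b(n)\}$.
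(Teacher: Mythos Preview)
Your argument is correct and follows essentially the same route the paper indicates by its citation list (Definition~\ref{f+expf}, Propositions~\ref{plein} and~\ref{Weierstrass3}, Theorems~\ref{Indexation} and~\ref{Indexation2}): you simply unpack those references into explicit computations. One small point worth noting is that your direct Taylor comparison for the duplication formula is actually more self-contained than the paper's citation of Proposition~\ref{Weierstrass3}, whose first item is stated only for odd $q$, whereas the identity here is the $q=2$ case; your observation that the factor $\sqrt[2r]{-1}$ supplies the missing sign twist is exactly what is needed to extend that argument.
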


\begin{proof}
By Definition \ref{f+expf}, Propositions \ref{plein}, \ref{Weierstrass3} and Theorems
\ref{Indexation}, \ref{Indexation2}, we get the expected results (see also Proposition \ref{Weierstrass2}).
\end{proof}

\goodbreak

\begin{example}\cite{Ngo2}
\begin{eqnarray*}
\Li_{-1,-1}&=&\Li_{-x_1^*+5(2x_1)^*-7(3x_1)^*+3(4x_1)^*},\cr
\Li_{-2,-1}&=&\Li_{x_1^*-11(2x_1)^*+31(3x_1)^*-33(4x_1)^*+12(5x_1)^*},\cr
\Li_{-1,-2}&=&\Li_{x_1^*-9(2x_1)^*+23(3x_1)^*-23(4x_1)^*+8(5x_1)^*},\cr
\H_{-1,-1}&=&\H_{-y_1^*+5(2y_1)^*-7(3y_1)^*+3(4y_1)^*},\cr
\H_{-2,-1}&=&\H_{y_1^*-11(2y_1)^*+31(3y_1)^*-33(4y_1)^*+12(5y_1)^*},\cr
\H_{-1,-2}&=&\H_{y_1^*-9(2y_1)^*+23(3y_1)^*-23(4y_1)^*+8(5y_1)^*}.
\end{eqnarray*}
Hence,
$\begin{array}{ccc}
\zeta_{\shuffle}(-1,-1)=0,&
\zeta_{\shuffle}(-2,-1)=-1,&
\zeta_{\shuffle}(-1,-2)=0,
\end{array}$
and
\begin{eqnarray*}
\gamma_{-1,-1}&=&-\Gamma^{-1}(2)+5\Gamma^{-1}(3)-7\Gamma^{-1}(4)+3\Gamma^{-1}(5)={11}/{24},\cr
\gamma_{-2,-1}&=&\Gamma^{-1}(2)-11\Gamma^{-1}(3)+31 \Gamma^{-1}(4)-33\Gamma^{-1}(5)+12\Gamma^{-1}(6)=-{73}/{120},\cr
\gamma_{-1,-2}&=&\Gamma^{-1}(2)-9\Gamma^{-1}(3)+23 \Gamma^{-1}(4)-23\Gamma^{-1}(5)+8\Gamma^{-1}(6)=-{67}/{120}.
\end{eqnarray*}
\end{example}

From Theorems \ref{Indexation2} and \ref{Newton-Girard}, one deduces

\begin{corollary}\label{identity}
\begin{enumerate}
\item With the notations of \eqref{notation}, Definition \ref{f+expf} and with the
differential forms $\{(\partial\ell_r)dz\}_{r\ge1}$, for any $z\in\C,\abs{z}<1$, one has
\begin{eqnarray*}
\gamma_{\STUFFLE_{r\ge1}(z^ry_r)^*}
=\prod_{r\ge1}\gamma_{(z^ry_r)^*}=\prod_{r\ge1}e^{\ell_r(z)}=\prod_{r\ge1}\dfrac1{\Gamma_{y_r}(1+z)}=\alpha_0^z(\shuffle_{r\ge1}y_r^*).
\end{eqnarray*}
\item One has, for $\abs{a_s}<1,\abs{b_s}<1$ and $\abs{a_s+b_s}<1$,
\begin{eqnarray*}
\gamma_{(\sum_{s\ge1}(a_s+b_s)y_s+\sum_{r,s\ge1}a_sb_ry_{s+r})^*}
=\gamma_{(\sum_{s\ge1}a_sy_s)^*}\gamma_{(\sum_{s\ge1}b_sy_s)^*}.
\end{eqnarray*}
Hence,
$\gamma_{(a_sy_s+a_ry_r+a_sa_ry_{s+r})^*}=\gamma_{(a_sy_s)^*}\gamma_{(a_ry_r)^*},
\gamma_{(-a_s^2y_{2s})^*}=\gamma_{(a_sy_s)^*}\gamma_{(-a_sy_s)^*}$.
\end{enumerate}
\end{corollary}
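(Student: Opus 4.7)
The plan is to deduce both items directly from Theorems~\ref{Indexation2} and~\ref{Newton-Girard} by transporting identities of noncommutative rational series in $\ncs{\C_{\mathrm{exc}}^{\mathrm{rat}}}{Y}$ through the character $\gamma_\bullet$ and, in parallel, through the shuffle morphism $\alpha_0^z$.

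For Part 1, I would walk through the chain of five expressions from right to left. With the differential forms $\{(\partial\ell_r)dz\}_{r\ge1}$, Proposition~\ref{plein}.\ref{inputs} gives $\alpha_0^z(y_r^*)=e^{\ell_r(z)}$ for every $r\ge1$, and Definition~\ref{f+expf} identifies this with $\Gamma_{y_r}^{-1}(1+z)$. Since $\alpha_0^z$ is a shuffle morphism and the infinite product $\prod_{r\ge1}e^{\ell_r(z)}$ converges absolutely on compact subsets of $D_{<1}$ (using the uniform bound $|\ell_r(z)|\le\zeta(r)|z|^r/(1-|z|^r)=O(|z|^r)$), we obtain $\alpha_0^z(\shuffle_{r\ge1}y_r^*)=\prod_{r\ge1}e^{\ell_r(z)}$, which covers three of the four equalities. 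Theorem~\ref{Newton-Girard} then gives $\gamma_{(z^ry_r)^*}=\Gamma_{y_r}^{-1}(1+z)=e^{\ell_r(z)}$, and invoking the stuffle character property of $\gamma_\bullet$ on the infinite stuffle closes the chain.

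For Part 2, I would invoke the closing identity of Theorem~\ref{Indexation2},
\[
\Bigl(\sum_{s\ge1}a_sy_s\Bigr)^*\stuffle\Bigl(\sum_{s\ge1}b_sy_s\Bigr)^*=\Bigl(\sum_{s\ge1}(a_s+b_s)y_s+\sum_{r,s\ge1}a_sb_ry_{s+r}\Bigr)^*,
\]
valid in $\ncs{\C_{\mathrm{exc}}^{\mathrm{rat}}}{Y}$ under the convergence hypotheses $|a_s|,|b_s|,|a_s+b_s|<1$. Applying $\gamma_\bullet$ to both sides and using that it is a stuffle character (Theorem~\ref{Newton-Girard}) delivers the main formula, and the two displayed special cases come out by restricting to a single pair $(s,r)$ and by specializing $b_s=-a_s$, noting that then $a_s+b_s=0$ and the only surviving cross term is $-a_s^2y_{2s}$.

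The main obstacle will be the meaning and handling of the infinite stuffle $\STUFFLE_{r\ge1}(z^ry_r)^*$ in Part 1: one must verify that this infinite product lies in a stuffle subalgebra on which $\gamma_\bullet$ has been (or can be) extended, and that $\gamma_\bullet$ remains multiplicative on such infinite products. The resolution is topological: the partial stuffle products $\STUFFLE_{r=1}^{N}(z^ry_r)^*$ are Cauchy in the valuation topology of $\ncs{\C}{Y}$ (each new factor $(z^{N+1}y_{N+1})^*-1$ contributes only terms of weight $\ge N+1$), their scalar images $\prod_{r=1}^N e^{\ell_r(z)}$ converge uniformly on compacts of $D_{<1}$ by the $\ell_r$-estimate above, and $\gamma_\bullet$ is extended continuously to the limit. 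This mirrors precisely the shuffle situation $\shuffle_{r\ge1}y_r^*$ already exhibited in the footnote to Lemma~\ref{exchangeable}, the only difference being the substitution of $\stuffle$ for $\shuffle$.
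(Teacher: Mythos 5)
Your proposal is correct and follows essentially the same route as the paper, which presents this corollary as a direct consequence of Theorems~\ref{Indexation2} and~\ref{Newton-Girard} (with $\alpha_0^z(y_r^*)=e^{\ell_r(z)}$ coming from Proposition~\ref{plein} under the stated choice of differential forms). Your extra care about the convergence of the infinite stuffle product matches the paper's own subsequent remark that $\stuffle_{k\ge1}(t^ky_k)^*$ lives in the Hausdorff group of the quasi-shuffle Hopf algebra.
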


\begin{remark}
The restriction $\alpha_0^z:(\C[\{y_r,y_r^*\}_{r\ge1}],\shuffle,1_{Y^*})\longrightarrow\C[L+E]$
is injective (see Corollary \ref{disjoint}) while $\ker(\gamma_{\bullet})\neq\{0\}$,
over $\ncp{\C}{Y}\stuffle\ncs{\C^{\mathrm{rat}}_{\mathrm{exc}}}{Y}$ \cite{CM}.
\end{remark}

\begin{example}\label{example1}\cite{words03,orlando}
By Theorem \ref{Newton-Girard},
\begin{eqnarray*}
\gamma_{(-t^2y_2)^*}=\Gamma_{y_2}^{-1}(1+\mathrm{i}t),&
\gamma_{(ty_1)^*}=\Gamma_{y_1}^{-1}(1+t),&
\gamma_{(-ty_1)^*}=\Gamma_{y_1}^{-1}(1+t).
\end{eqnarray*}
Then, by Corollary \ref{identity},
$\gamma_{(-t^2y_2)^*}=\gamma_{(ty_1)^*}\gamma_{(-ty_1)^*}$
meaning that
\begin{eqnarray*}
\Gamma_{y_2}^{-1}(1+\mathrm{i}t)=\Gamma_{y_1}^{-1}(1+t)\Gamma_{y_1}^{-1}(1-t).
\end{eqnarray*}
Or equivalently,
\begin{eqnarray*}
\exp\Bigl(-\sum_{k\ge2}\zeta(2k)\frac{t^{2k}}{k}\Bigr)
=\sum_{k\ge2}\zeta(\overbrace{{2,\ldots,2}}^{k{\tt times}})(-1)^kt^{2k}
=\frac{\sin(t\pi)}{t\pi}
=\sum_{k\ge1}(-1)^k\frac{(t\pi)^{2k}}{(2k+1)!}.
\end{eqnarray*}
Since
$\gamma_{(-t^2y_2)^*}=\zeta((-t^2x_0x_1)^*)$
then, identifying the coefficients of $t^{2k}$, we get
\begin{eqnarray*}
\frac{\zeta(\overbrace{{2,\ldots,2}}^{k{\tt times}})}{\pi^{2k}}=\dfrac{1}{(2k+1)!}\in\Q.
\end{eqnarray*}

Similarly, by Theorem \ref{Newton-Girard},
\begin{eqnarray*}
\gamma_{(-t^4y_4)^*}=\Gamma_{y_4}^{-1}(1+\sqrt[4]{-1}t),&
\gamma_{(t^2y_2)^*}=\Gamma_{y_2}^{-1}(1+t),&
\gamma_{(-t^ty_2)^*}=\Gamma_{y_2}^{-1}(1+\mathrm{i}t).
\end{eqnarray*}
Then, by Corollary \ref{identity},
$\gamma_{(-t^4y_4)^*}=\gamma_{(t^2y_2)^*}\gamma_{(-t^2y_2)^*}$
meaning that
\begin{eqnarray*}
\Gamma_{y_4}^{-1}(1+\sqrt[4]{-1}t)=\Gamma_{y_2}^{-1}(1+t)\Gamma_{y_2}^{-1}(1+\mathrm{i}t).
\end{eqnarray*}
Or equivalently,
\begin{eqnarray*}
\exp\Bigl(-\sum_{k\ge1}\zeta(4k)\frac{t^{4k}}{k}\Bigr)
&=&\sum_{k\ge2}\zeta(\underbrace{{4,\ldots,4}}_{k{\tt times}})(-1)^k\frac{(t\pi)^{4k}}{(4k+2)!}\\
&=&\frac{\sin(\mathrm{i}t\pi)}{\mathrm{i}t\pi}\frac{\sin(t\pi)}{t\pi}\\
&=&\sum_{k\ge1}\dfrac{2(-4t\pi)^{4k}}{(4k+2)!}.
\end{eqnarray*}
Since
$\begin{array}{ccc}
\gamma_{(-t^4y_4)^*}=\zeta((-t^4y_4)^*),&
\gamma_{(-t^2y_2)^*}=\zeta((-t^2y_2)^*),&
\gamma_{(t^2y_2)^*}=\zeta((t^2y_2)^*)
\end{array}$
then, using the poly-morphism $\zeta$ and identities on rational series, we get
\begin{eqnarray*}
\zeta((-t^4y_4)^*)&=&\zeta((-t^2y_2)^*)\zeta((t^2y_2)^*)\\
&=&\zeta((-t^2x_0x_1)^*)\zeta((t^2x_0x_1)^*))\\
&=&\zeta((-4t^4x_0^2x_1^2)^*).
\end{eqnarray*}
Thus, by identification the coefficients of $t^{4k}$, we obtain
\begin{eqnarray*}
\frac{\zeta(\overbrace{{4,\ldots,4}}^{k{\tt times}})}{4^k\pi^{4k}}
=\dfrac{\zeta(\overbrace{{3,1,\ldots,3,1}}^{k{\tt times}})}{\pi^{4k}}
=\frac{2}{(4k+2)!}\in\Q.
\end{eqnarray*}
\end{example}

\begin{corollary}[comparison formula, \cite{CASC2018,CM}]\label{C1}
For any $z,a,b\in\C$ such that $\abs{z}<1$ and $\Re(a)>0,\Re (b)>0$, we have
\begin{eqnarray*}
\mathrm{B}(z;a,b)
=\Li_{x_0[(ax_0)^*\shuffle((1-b)x_1)^*]}(z)
=\Li_{x_1[((a-1)x_0)^*\shuffle(-bx_1)^*]}(z).
\end{eqnarray*}
Hence, on the one hand
\begin{eqnarray*}
\mathrm{B}(a,b)
=\zeta_{\shuffle}(x_0[(ax_0)^*\shuffle((1-b)x_1
=\zeta_{\shuffle}(x_1[((a-1)x_0)^*\shuffle(-bx_1)^*])
\end{eqnarray*}
and, on the other hand
\begin{eqnarray*}
\mathrm{B}(a,b)
=\frac{\gamma_{((a+b-1)y_1)^*}}{\gamma_{((a-1)y_1)^*\stuffle((b-1)y_1)^*}}
=\frac{\gamma_{((a+b-1)y_1)^*}}{\gamma_{((a+b-2)y_1+(a-1)(b-1)y_2)^*}}.
\end{eqnarray*}
\end{corollary}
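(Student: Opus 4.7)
The plan is to evaluate both polylogarithmic expressions explicitly by using the multiplicative formulas for $\Li$ of exchangeable rational characters from the example just preceding the corollary, then to apply the relevant iterated-integral operators $\iota_0,\iota_1$ corresponding to left-multiplication by $x_0,x_1$, and finally to pass to the values at $z=1$ via $\zeta_{\shuffle}$ and to rewrite using $\gamma_{\bullet}$.

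First I would compute the two interior shuffles. From the example, $\Li_{(ax_0)^*}(z)=z^{a}$ and $\Li_{(cx_1)^*}(z)=(1-z)^{-c}$, and since $\alpha_0^z$ is a shuffle morphism into $\calH(\Omega)$, one gets
\begin{eqnarray*}
\Li_{(ax_0)^*\shuffle((1-b)x_1)^*}(z)=z^{a}(1-z)^{b-1},
\quad
\Li_{((a-1)x_0)^*\shuffle(-bx_1)^*}(z)=z^{a-1}(1-z)^{b}.
\end{eqnarray*}
Next, left-concatenation by $x_0$ (resp.\ $x_1$) corresponds to the section $\iota_0$ (resp.\ $\iota_1$) of Theorem \ref{Indexation}: $\Li_{x_0 S}(z)=\int_0^z\omega_0(t)\Li_S(t)$ and $\Li_{x_1 S}(z)=\int_0^z\omega_1(t)\Li_S(t)$. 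Applying this to each of the two products yields the same integrand $t^{a-1}(1-t)^{b-1}dt$, so both evaluate to $\mathrm{B}(z;a,b)$ by definition~\eqref{beta}. This establishes the chain of equalities claimed in the first display.

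Then I pass to $z\to1$. Under $\Re a>0,\Re b>0$ the integral $\mathrm{B}(z;a,b)$ extends continuously to $z=1$ with value $\mathrm{B}(a,b)$, so the finite parts defining $\zeta_{\shuffle}$ on the relevant scale coincide with ordinary limits. Since $\zeta_{\shuffle}$ is a shuffle character on the extended domain $\CX\shuffle\ncs{\C_{\mathrm{exc}}^{\mathrm{rat}}}{X}$ (Theorem \ref{Newton-Girard}), this immediately yields the two $\zeta_{\shuffle}$-expressions for $\mathrm{B}(a,b)$.

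For the $\gamma_{\bullet}$-expressions, I would use that by Theorem \ref{Newton-Girard}, $\gamma_{(cy_1)^*}=\Gamma_{y_1}^{-1}(1+c)=1/\Gamma(1+c)$ for $|c|<1$ (with analytic continuation in $c$ where needed by the same integral representations). Since $\gamma_{\bullet}$ is a stuffle character,
\begin{eqnarray*}
\gamma_{((a-1)y_1)^*\stuffle((b-1)y_1)^*}
=\gamma_{((a-1)y_1)^*}\gamma_{((b-1)y_1)^*}
=\frac{1}{\Gamma(a)\Gamma(b)},
\end{eqnarray*}
and $\gamma_{((a+b-1)y_1)^*}=1/\Gamma(a+b)$; hence the quotient is $\Gamma(a)\Gamma(b)/\Gamma(a+b)=\mathrm{B}(a,b)$. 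The second form of the denominator follows by rewriting the stuffle product of two $(\,\cdot\,y_1)^*$ series by the rational identity of Theorem \ref{Indexation2}, namely $((a-1)y_1)^*\stuffle((b-1)y_1)^*=((a+b-2)y_1+(a-1)(b-1)y_2)^*$.

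The only real subtlety (and thus the main obstacle) is to justify that the passage $z\to1$ truly matches the regularized value $\zeta_{\shuffle}$ on these rational inputs rather than on a polynomial truncation: one has to check that the rational series $x_0[(ax_0)^*\shuffle((1-b)x_1)^*]$ and $x_1[((a-1)x_0)^*\shuffle(-bx_1)^*]$ lie in $\mathrm{Dom}(\Li_{\bullet})\cap(\CX\shuffle\ncs{\C_{\mathrm{exc}}^{\mathrm{rat}}}{X})$ and that their associated polylogarithms have an ordinary limit at $1$, so the finite-part regularization collapses to this limit. This is immediate from the convergence of the Eulerian integral $\mathrm{B}(a,b)$ under $\Re a,\Re b>0$, and from the closure properties of $\mathrm{Dom}(\Li_{\bullet})$ listed before Theorem \ref{Indexation}.
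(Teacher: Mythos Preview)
Your proof is correct and is exactly the unpacking the paper intends: the paper itself gives no argument beyond ``From Theorems \ref{Indexation}--\ref{Newton-Girard}, one also deduces'', and your computation of the two inner shuffles via the example, followed by the application of $\iota_0,\iota_1$ to recover the beta integrand, together with the $\gamma_{\bullet}$ evaluation through $\gamma_{(cy_1)^*}=\Gamma^{-1}(1+c)$ and the stuffle identity of Theorem~\ref{Indexation2}, is precisely that deduction. Your remark about the mismatch between the constraint $|c|<1$ in Theorem~\ref{Newton-Girard} and the hypothesis $\Re a,\Re b>0$ (handled by analytic continuation) is a fair point that the paper leaves implicit.
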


\begin{proof}
The results, of $\mathrm{B}(z;a,b)$, are the computations of iterated integrals associated
to different rational series, using the differential forms in \eqref{polylog}.
Those of $\mathrm{B}(a,b)$, are then immediate consequences,
by evaluating these iterated integrals at $z=1$ and by using Definition \ref{f+expf}
and Corollary \ref{identity}.
\end{proof}

\begin{example}\cite{words03,orlando}\label{example2}
Let us consider, for $t_0,t_1\in\C,\abs{t_0}<1,\abs{t_1}<1$,
\begin{eqnarray*}
R:=t_0x_0(t_0x_0+t_0t_1x_1)^*(t_0t_1x_1)=t_0^2t_1x_0[(t_0x_0)^*\shuffle(t_0t_1x_1)^*]x_1.
\end{eqnarray*}
Then, with the differential forms in \eqref{polylog}, we get successively
\begin{eqnarray*}
\Li_R(z)
&=&t_0^2t_1\int_0^z\frac{ds}s\int_0^s\Bigl(\frac{s}r\Bigr)^{t_0}\Bigl(\frac{1-r}{1-s}\Bigr)^{t_0t_1}\frac{dr}{1-r}\\
&=&t_0^2t_1\int_0^z(1-s)^{-t_0t_1}s^{t_0-1}\int_0^s(1-r)^{t_0t_1-1}r^{-t_0}dsdr
\end{eqnarray*}
By change of variable, $r=st$, we obtain then
\begin{eqnarray*}
\Li_R(z)&=&
t_0^2t_1\int_0^z(1-s)^{-t_0t_1}\int_0^1(1-st)^{t_0t_1-1}t^{-t_0}dtds.
\end{eqnarray*}
It follows then
\begin{eqnarray*}
\zeta(R)=t_0^2t_1\int_0^1\int_0^1(1-s)^{-t_0t_1}(1-st)^{t_0t_1-1}t^{-t_0}dtds.
\end{eqnarray*}
By change of variable, $y=\dfrac{1-s}{1-st}$, we obtain also
\begin{eqnarray*}
\zeta(R)=t_0^2t_1\int_0^1\int_0^1(1-ty)^{-1}t^{-t_0}y^{-t_0t_1}dydt.
\end{eqnarray*}
By expending $(1-ty)^{-1}$ and then integrating, we get on the one hand
\begin{eqnarray*}
\zeta(R)=\sum_{n\ge1}\frac{t_0}{n-t_0}\frac{t_0t_1}{n-t_0t_1}=\sum_{k>l>0}\zeta(k)t_0^kt_1^l.
\end{eqnarray*}
On the other hand, using the the expansion of $R$, we get also
\begin{eqnarray*}
\zeta(R)&=&\sum_{k>0}\sum_{l>0}\sum_{s_1+\ldots+s_l=k\atop s_1\ldots,s_l\ge1,s1\ge2}\zeta(s_1,\ldots,s_l)t_0^kt_1^l.
\end{eqnarray*}
Finally, by identification the coefficients of $\scal{\zeta(R)}{t_0^kt_1^l}$, we deduce the {\it sum formula}
\begin{eqnarray*}
\zeta(k)=\sum_{s_1+\ldots+s_l=k\atop s_1\ldots,s_l\ge1,s_1\ge2}\zeta(s_1,\ldots,s_l).
\end{eqnarray*}
\end{example}

\section{Conclusion}\label{conclusion}
In this work, we illustrated a bijection, between a sub shuffle algebra of noncommutative rational series
(recalled in \ref{combinatorialframeworks})
and a subalgebra of holomorphic functions, $\calH(\Omega)$, on a simply connected domain $\Omega\subset\C$ containing
the family of extended eulerian functions $\{\Gamma_y^{-1}(1+z)\}_{y\in Y}$ and the family of their logarithms,
$\{\log\Gamma_y^{-1}(1+z)\}_{y\in Y}$ (introduced in \ref{eulerianfunctions}), involved in summations of polylogarithms
and harmonics sums (studied in \ref{Polylogarithms}) and in regularizations of divergent polyzetas
(achieved, for this stage, in \ref{regularization}).

These two families are algebraically independent over a differential subring of $\calH(\Omega)$ and generate
freely two disjoint functional algebras. For any $y_r\in Y$, the special functions $\Gamma_{y_r}^{-1}(1+z)$
and $\log\Gamma_{y_r}^{-1}(1+z)$ are entire and holomorphic on the unit open disc, respectively. In particular,
$\Gamma_{y_r}^{-1}(1+z)$ admits a countable set of isolated zeroes on the complex plane, \textit{i.e.}
$\biguplus_{\chi\in G_r}\chi\Z_{\le-1}$, where $G_r$ is the set of solutions of the equation $z^r=(-1)^{r-1}$.

These functions allow to obtain identities, at arbitrary weight, among polyzetas and an analogue situation, as
the ratios $\zeta(2k)/\pi^{2k}$, drawing out consequences about a structure of polyzetas.
This work will be completed, in the forth comming works, by a study a family of functions
obtained as image of rational series for which their linear representation $(\nu,\mu,\eta)$
are such that the Lie algebra generated by the matrices $\{\mu(y)\}_{y\in Y}$ is \textit{solvable}.

\bibliographystyle{amsplain}

%
%
%
%

\end{document}